\newtheorem{thm}{Theorem}
\newtheorem{lemma}[thm]{Lemma}
\newtheorem{prop}[thm]{Proposition}
\newtheorem{defi}[thm]{Definition}
\newtheorem{coro}[thm]{Corollary}
\def\diver{\text{div}\,}
\def\curl{\text{curl}\,}
\def\supp{\text{supp}\,}
\title[an inverse problem in electro-seismic imaging]{on an inverse source
problem for the biot equations in electro-seismic imaging}
\date{}
\begin{document}

\author[Y. Gao]{Yixian Gao}
\address{School of Mathematics and Statistics, Center for Mathematics and
Interdisciplinary Sciences, Northeast Normal University, Changchun, Jilin
130024, P. R. China}
\email{gaoyx643@nenu.edu.cn}

\author[P. Li]{Peijun Li}
\address{Department of Mathematics, Purdue University, West Lafayette, IN
47907, USA.}
\email{lipeijun@math.purdue.edu}

\author[Y. Yang]{Yang Yang}
\address{Department of Computational Mathematics, Science and Engineering,
Michigan State University, East Lansing, MI 48824, USA.}
\email{yangy5@msu.edu}

\thanks{The research of YG was supported in part by NSFC grant 11671071,
JLSTDP 20160520094JH and FRFCU2412017FZ005. The research of YY was partly
supported by NSF Grant DMS-1715178, an AMS Simons travel grant, and a start-up
fund from Michigan State University.}

\maketitle

\begin{abstract}
Electro-seismic imaging is a novel hybrid imaging modality in geophysical
exploration. This paper concerns an inverse source problem for Biot's
equations that arise in electro-seismic imaging. Using the time reversal method,
we derive an explicit reconstruction formula, which immediately gives the 
uniqueness and stability of the reconstructed solution.
\end{abstract}

\section{Introduction}

Electro-Seismic (ES) imaging and Seismo-Electric (SE) imaging are
emerging modalities in geophysical exploration. Compared with traditional
modalities, they provide images of high accuracy with low cost, which has led
to wide applications in locating groundwater aquifers and petroleum hydrocarbon
reservoir. The underlying physical phenomena of ES and SE imaging are known as
electro-seismic conversion and seismo-electric conversion,
respectively. These conversions usually occur in a porous medium and have been
used as bore-hole logging and cross-hole logging tools
\cite{ARJMK2012, DBKH2009, HGH2007, Zhu2005}.

Both ES and SE imaging involve conversions of electromagnetic energy and seismic
energy. The governing equations were derived by Pride
\cite{P1994}:
\begin{align}
\nabla \times E = -\mu\partial_t H \vspace{1ex}, \label{eqn:max1}\\
\nabla \times H = (\epsilon \partial_t + \sigma) E + L(-\nabla p - \rho_{12}
\partial^2_t \mathbf{u^s}), \label{eqn:max2}\\
\rho_{11} \partial^2_t \mathbf{u^s} + \rho_{12} \partial^2_t \mathbf{u^f} =
\diver{\tau},\label{eqn:biot1}\\
\rho_{12} \partial^2_t \mathbf{u^s} + \rho_{22} \partial^2_t \mathbf{u^f} +
\frac{\eta}{\kappa} \partial_t \mathbf{u^f} + \nabla p = \frac{\eta}{\kappa} LE,
\label{eqn:biot2}\\
\tau = (\lambda \, \diver{\mathbf{u^s}} + q \, \diver{\mathbf{u^f}}) I_3 + \mu
(\nabla \mathbf{u^s} + (\nabla \mathbf{u^s})^T), \label{eqn:biot3}\\
-p = q \, \diver{\mathbf{u^s}} + r\, \diver{\mathbf{u^f}}, \label{eqn:biot4}
\end{align}
where $E$ is the electric field, $H$ is the magnetic field, $\mathbf{u}^s$ is
the solid displacement, $\mathbf{u}^f$ is the fluid displacement, $\mu$ is the
magnetic permeability, $\epsilon$ is the dielectric constant, $\sigma$ is the
conductivity, $L$ is the electro-kinetic mobility parameter, $\rho_{11}$ is a
linearly combined density of the solid and the fluid, $\rho_{12}$ is the
density of the pore fluid, $\rho_{22}$ is the mass coupling coefficient,
$\kappa$ is the fluid flow permeability, $\eta$ is the viscosity of pore fluid,
$\lambda$ and $\mu$ are the Lam\'{e} parameters, $q$ and $r$ are the Biot moduli
parameters, $\tau$ is the bulk stress tensor, $p$ is the pore pressure, $I_3$ is
the $3\times 3$ identity matrix. Equations \eqref{eqn:max1}--\eqref{eqn:max2}
are Maxwell's equations, modeling the electromagnetic wave propagation.
Equations \eqref{eqn:biot1}--\eqref{eqn:biot4} are Biot's equations,
describing the seismic wave propagation in the porous medium \cite{B1956,
B1956_1}.

In this work, we focus on the ES imaging. The inverse problem in ES
imaging concerns recovery of the physical parameters in 
\eqref{eqn:max1}--\eqref{eqn:biot4} from boundary measurement of the
displacements $\mathbf{u^s}$ and $\mathbf{u^f}$. This problem can be studied in
two mutually relevant steps. The first step concerns
an inverse source problem for the Biot equations
\eqref{eqn:biot1}--\eqref{eqn:biot4} to recover the coupling term
$\frac{\eta}{\kappa} LE$. The second step utilizes
this term as internal measurement to retrieve physical parameters in the
Maxwell equations \eqref{eqn:max1}--\eqref{eqn:max2}. The second step has been
considered in \cite{CY2013, CdH2014}.

The first step is not well understood mathematically. In an
unpublished work of Chen and de Hoop \cite{C2015}, they suggested using
Gassmann's approximation \cite{G1951} to reduce Biot's equations to the
elastic equation and then applying the result in \cite{T2012}. This approach has
the limitation that Gassmann's approximation is valid only when the fluid
permeability is small and the wave frequency is sufficiently low. In
\cite{BY2013}, an inverse source problem was investigated in a different context
by assuming access to internal measurement rather than boundary measurement.

Our goal is to demonstrate a general approach to the first step and complete
the two-step approach towards the coupled physics inverse problem in ES
imaging. We study the inverse source problem from boundary measurement of
$\mathbf{u^s}$ and $\mathbf{u^f}$ directly for Biot's equations, without
resorting to Gassmann's approximation or any internal measurement. We derive an
explicit reconstruction formula in terms of a Neumann series. Uniqueness and
stability of the reconstructed solution are immediate consequences of the
explicit formula.

\section{Problem formulation and main result}

We make two simplifications to the Biot equations
\eqref{eqn:biot1}--\eqref{eqn:biot4}. First we ignore the attenuation
term $\frac{\eta}{\kappa}\partial_t\mathbf{u}^{\bf f}$ in \eqref{eqn:biot2}.
This simplification is not essential since the first order time derivative can
be handled by adding a perturbation argument to our proof. Second, we assume
that the source term $\frac{\eta}{\kappa}LE$ is instantaneous in time and has
the form
\[
\frac{\eta(x)}{\kappa(x)}L(x)E(t,x)=\delta(t)f(x),
\]
where $f$ is an unknown source function. This assumption is necessary to gain
uniqueness in the inverse problem since our boundary
measurement has merely three degrees of freedom and cannot be expected to
recover four degrees uniquely. 

Let us rewrite Biot's equations (without attenuation) in different forms for
subsequent analysis. It follows from \eqref{eqn:biot3}--\eqref{eqn:biot4} that
we have 
\begin{align*}
\diver{\tau} =  \Delta_{\mu,\lambda}\mathbf{u^s} +
\nabla(q\,\diver{\mathbf{u^f}}), \quad
\nabla p =  -\nabla (q \, \diver{\mathbf{u^s}} ) - \nabla (r\,
\diver{\mathbf{u^f}} ),
\end{align*}
where the elastic operator $\Delta_{\mu,\lambda}$ is defined by
\begin{equation} \label{elasticop}
\begin{array}{rl}
\Delta_{\mu,\lambda} \mathbf{u^s} := & \diver{(\mu(\nabla \mathbf{u^s} + (\nabla
\mathbf{u^s})^T))} + \nabla(\lambda\diver{\mathbf{u^s}}), \\
= & \mu\Delta \mathbf{u^s} + (\mu+\lambda)\nabla(\diver{\mathbf{u^s}}) +
(\diver{\mathbf{u^s}})\nabla\lambda + (\nabla \mathbf{u^s} + (\nabla
\mathbf{u^s})^T)\nabla\mu.
\end{array}
\end{equation}
Then the Biot equations (without attenuation) can be written as
\begin{equation} \label{newBiot}
\left\{
\begin{array}{rcl}
\rho_{11} \partial^2_t \mathbf{u^s} + \rho_{12} \partial^2_t \mathbf{u^f} -
\Delta_{\mu,\lambda}\mathbf{u^s} - \nabla(q\,\diver{\mathbf{u^f}} ) & = & 0
\vspace{1ex}\\
\rho_{12} \partial^2_t \mathbf{u^s} + \rho_{22} \partial^2_t \mathbf{u^f} -
\nabla(q\,\diver{\mathbf{u^s}}) - \nabla(r\,\diver{\mathbf{u^f}}) & = &
\delta(t)f(x).
\end{array}
\right.
\end{equation}

One can further write the system \eqref{newBiot} into a matrix equation. Let
\begin{equation} \label{MPD}
M = 
\left(
\begin{array}{cc}
\rho_{11} I_3 & \rho_{12} I_3 \\
\rho_{12} I_3 & \rho_{22} I_3 
\end{array}
\right),
\quad\quad
P(D) =
\left(
\begin{array}{cc}
-\Delta_{\mu,\lambda}\cdot & -\nabla(q\,\diver{\cdot} ) \\
-\nabla(q\,\diver{\cdot} ) & -\nabla(r\,\diver{\cdot} ) 
\end{array}
\right).
\end{equation}
The matrix form of \eqref{newBiot} is
\[
M \partial^2_t \mathbf{u} + P(D) \mathbf{u} = \delta(t)\mathbf{f}.
\]
which, by Duhamel's principle, is equivalent to the initial value problem:
\begin{equation} \label{forward}
\left\{
\begin{array}{rcl}
M \partial^2_t \mathbf{u} + P(D) \mathbf{u} & = & \mathbf{0} \quad\quad\quad
\text{ in } \mathbb{R}\times\mathbb{R}^3, \\
\mathbf{u}|_{t=0} & = & \mathbf{f(x)}, \\
\partial_t \mathbf{u}|_{t=0} & = & \mathbf{0}. 
\end{array}
\right.
\end{equation}

Hereafter, a vector with $6$ components is written in bold
face such as $\mathbf{u}$. It is often split into two $3$-vectors such as
$\mathbf{u}=(\mathbf{u^s}, \mathbf{u^f})\in\mathbb{R}^3\times\mathbb{R}^3$. The
names of the two $3$-vectors are chosen to be consistent with the splitting for
solutions of the Biot equations. Following this notation, the right hand side
is defined as $\mathbf{f}=(\mathbf{f^s},\mathbf{f^f}):=(0,f)$. Hence
we will not refer to the specific form any longer but consider a general
$\mathbf{f}$. Well-posedness of the initial value problem \eqref{forward} can be
proved in a similar way as \cite[Theorem 1.1]{BY2013}.

The following hypothesis are crucial and assumed throughout the paper.

(\textbf{H1}): The functions $\rho_{11}$, $\rho_{12}$,
$\rho_{22}$, $\mu$, $\lambda$, $q$, $r$ are bounded from below by
a constant, say $c>0$, and have bounded derivatives. 

(\textbf{H2}): $\rho_{11}\rho_{22}-\rho^2_{12}>0~ \text{ in }
\mathbb{R}^3. $

(\textbf{H3}): $\lambda r-q^2>0 ~\text{ in } \mathbb{R}^3.$

\noindent Here (\textbf{H1}) states that all the physical parameters are
positive and sufficiently smooth. (\textbf{H2}) ensures that the matrix $M$ is
positive definite. (\textbf{H3}) guarantees that the energy functional defined
in Section \ref{sec:funcspace} is non-negative.

In ES imaging, the measurement is the solid displacement $\mathbf{u^s}$ and the
fluid displacement $\mathbf{u^f}$ on the boundary of a domain-of-interest
$\Omega$ which is a smooth bounded open subset in $\mathbb{R}^{3}$. Introduce
the source-to-measurement operator $\bm{\Lambda}$ as follows:
\[
\bm{\Lambda} \mathbf{f} := \mathbf{u}|_{[0,T]\times\partial\Omega}
\]
where $\mathbf{u}$ is the solution of the initial value problem \eqref{forward}
and $T>0$ represents a duration of the measurement. Note that the initial
displacement occurs inside the domain-of-interest, which means that $\mathbf{f}$
is supported in the interior of $\Omega$. We are interested in recovering
information on $\mathbf{f}$ from the boundary measurement
$\bm{\Lambda}\mathbf{f}$.

A quick look at the equations \eqref{newBiot} suggests that there is a certain
gauge transform for the recovery of $\mathbf{u^f}$. In fact, adding
to $\mathbf{u^f}$ by any vector field $g$ that is divergence free (i.e,
$\diver{g}=0$) does not affect \eqref{newBiot}. This is a reflection of the
simple fact that $\mathbf{u^f}$ appears in the equations only in the form of
$\diver{\mathbf{u^f}}$. Taking such gauge into account, we raise the following
question which is the central topic of this paper.

\textbf{Inverse source problem:} Suppose that $M$ and $P(D)$ are known and
satisfy the hypotheses (\textbf{H1})-(\textbf{H3}), can one reconstruct the
initial source $\mathbf{f}=(\mathbf{f^s}, \mathbf{f^f})$, compactly supported in
$\Omega$, from the boundary measurement $\bm{\Lambda}\mathbf{f}$ up to a pair of
vector fields $(0,g)$ with $g$ divergence free?

We give an affirmative answer to the question, including a reconstruction
formula. Our main result can be summarized as follows. A rigorous restatement of
this theorem is given in Theorem \ref{thm:main}. 

\begin{thm}
Under appropriate assumptions, $\mathbf{f}$ can be uniquely and stably
reconstructed from $\bm{\Lambda}\mathbf{f}$ by a convergent Neumann series, up
to a pair of vector fields $(0,g)$ with $g$ divergence free.
\end{thm}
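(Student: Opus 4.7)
The plan is to emulate the time-reversal/Neumann-series strategy pioneered by Stefanov and Uhlmann for thermoacoustic tomography and adapt it to the coupled Biot system. Using the symmetry of $M$ and $P(D)$ together with hypotheses (\textbf{H1})--(\textbf{H3}), the natural energy functional
\[
\mathcal{E}(\mathbf{u})=\tfrac12\int_{\mathbb{R}^3}\bigl(M\partial_t\mathbf{u}\cdot\partial_t\mathbf{u}+P(D)\mathbf{u}\cdot\mathbf{u}\bigr)\,dx
\]
is non-negative and conserved by \eqref{forward}. Because $\mathbf{u^f}$ enters $P(D)$ only through $\diver\mathbf{u^f}$, the quadratic form $\mathcal{E}$ vanishes exactly on pairs $(0,g)$ with $g$ divergence-free. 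This motivates working in the Hilbert space $\mathcal{H}$ obtained by completing pairs of compactly supported smooth fields in $\Omega$, modulo the divergence-free gauge, with respect to $\mathcal{E}$. The source-to-measurement operator $\bm{\Lambda}$ descends to this quotient, and uniqueness is sought there.

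Next I would construct a time-reversal back-projection operator $A$. Given boundary data $\mathbf{h}=\bm{\Lambda}\mathbf{f}$ on $[0,T]\times\partial\Omega$, define $A\mathbf{h}:=\mathbf{v}|_{t=0}$, where $\mathbf{v}$ solves the backward Biot system $M\partial_t^2\mathbf{v}+P(D)\mathbf{v}=\mathbf{0}$ on $[0,T]\times\Omega$ with Dirichlet trace $\mathbf{h}$ on the lateral boundary and with terminal conditions $(\mathbf{v},\partial_t\mathbf{v})|_{t=T}=(\mathbf{w},0)$. Here $\mathbf{w}$ is a harmonic-type extension into $\Omega$ of $\mathbf{h}(T,\cdot)$ obtained by solving the static elliptic problem $P(D)\mathbf{w}=0$ with boundary value $\mathbf{h}(T,\cdot)$. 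The purpose of this correction is to cancel the energy still trapped inside $\Omega$ at time $T$, so that $A$ becomes a genuine quasi-inverse of $\bm{\Lambda}$. Well-posedness and boundedness of $A\colon \bm{\Lambda}(\mathcal{H})\to\mathcal{H}$ follow from standard energy estimates for the Biot system together with a continuous right inverse to the trace map provided by the elliptic problem above, considered modulo the divergence-free gauge.

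The core of the argument is the contraction estimate
\[
\|(\mathrm{Id}-A\bm{\Lambda})\mathbf{f}\|_{\mathcal{H}}\le k\,\|\mathbf{f}\|_{\mathcal{H}},\qquad k<1.
\]
To obtain it, one checks that $(\mathrm{Id}-A\bm{\Lambda})\mathbf{f}$ is precisely the Cauchy datum at $t=0$ of the solution whose restriction to $[0,T]\times\partial\Omega$ vanishes, so by energy conservation the norm on the left equals the energy still contained in $\Omega$ at time $T$. Bounding this residual by a fraction $k<1$ of the initial energy is a quantitative local energy decay estimate equivalent to a non-trapping geometric control condition: every generalized bicharacteristic of the principal part of $M^{-1}P(D)$, issued from $\overline{\Omega}$, must leave $\overline{\Omega}$ in time strictly less than $T$. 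Granted this, $A\bm{\Lambda}=\mathrm{Id}-(\mathrm{Id}-A\bm{\Lambda})$ is invertible by Neumann series, yielding the explicit reconstruction
\[
\mathbf{f}=\sum_{j=0}^{\infty}(\mathrm{Id}-A\bm{\Lambda})^j\,A\bm{\Lambda}\mathbf{f}
\]
in $\mathcal{H}$, which immediately delivers uniqueness modulo $(0,g)$ and Lipschitz stability.

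The principal obstacle lies in the contraction step, because $P(D)$ fails to be elliptic on the whole of $\mathbb{R}^6$: its principal symbol has a non-trivial null space, namely the divergence-free fluid component. One must therefore establish a sharp G\aa{}rding-type inequality on the quotient $\mathcal{H}$ and show that the characteristic variety, after quotienting out the gauge, decomposes into smooth sheets (corresponding to the fast P, slow P, and shear modes characteristic of Biot's theory) whose bicharacteristic flows are simultaneously non-trapping in $\Omega$ for the observation time $T$. A secondary technical issue is to verify that both the forward evolution \eqref{forward} and the backward time-reversed evolution preserve the divergence-free gauge, so that $\bm{\Lambda}$, $A$, and the product $A\bm{\Lambda}$ are genuinely well defined on the quotient space.
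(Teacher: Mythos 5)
Your overall architecture coincides with the paper's: the time-reversal operator with the harmonic-extension terminal datum $\bm{\phi}$ solving $P(D)\bm{\phi}=0$, the energy quotient space modulo $\{\mathbf{w^s}=0,\ \diver{\mathbf{w^f}}=0\}$, and the Neumann series $\mathbf{f}=\sum_j K^j A\bm{\Lambda}\mathbf{f}$. But there is a genuine gap at precisely the step you flag as "the principal obstacle": the contraction estimate. You assert that $\|K\|<1$ is a quantitative local energy decay "equivalent to" the non-trapping condition, and you leave it granted. That is not how the estimate can be obtained, and as stated the step would fail: non-trapping controls propagation of \emph{singularities}, so it guarantees only that whatever remains in $\Omega$ at time $T$ is \emph{smooth}; it does not by itself yield a uniform constant $k<1$ for all finite-energy sources, and no direct observability or geometric-control estimate of this kind is available for the Biot system, whose principal symbol is gauge-degenerate and carries several wave speeds. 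The paper's mechanism is different and splits into three parts: (i) pure energy bookkeeping --- the orthogonality $(\mathbf{u}(T)-\bm{\phi},\bm{\phi})_B=0$ together with conservation of energy in $\Omega$ (Lemma \ref{thm:COE}) and in $\mathbb{R}^3$ --- gives only the \emph{non-strict} bound $\|K\mathbf{f}\|_B\le\|\mathbf{f}\|_B$; (ii) strictness for each fixed $\mathbf{f}\neq 0$ comes from unique continuation: equality forces $E_\Omega(T,\mathbf{u})=E_{\mathbb{R}^3}(T,\mathbf{u})$, hence $\mathbf{u^s}(T,\cdot)=\diver{\mathbf{u^f}}(T,\cdot)=0$ outside $\Omega$, and Proposition \ref{thm:BiotUC} then kills $\mathbf{f}$ in $B$; (iii) the pointwise strict inequality is upgraded to the operator bound $\|K\|_{B\to B}<1$ via \emph{compactness} of $K$ --- and this is where non-trapping actually enters: $(\mathbf{u}(T),\partial_t\mathbf{u}(T))$ is smooth for $T>T(P(D),\Omega)$ and $\bm{\phi}$ is smooth by elliptic regularity, so $U_1$ is smoothing, $K$ is compact, and eigenvalues of modulus one are excluded by (ii).

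Your proposal contains none of the machinery needed for step (ii), which is the technical heart of the paper: Sections \ref{sec:system}--\ref{sec:FSPUC} rewrite Biot's equations as the principally scalar system \eqref{pssystem} (via the substitutions $v^s=\diver{\mathbf{u^s}}$, $v^f=\diver{\mathbf{u^f}}$, $\mathbf{v^s}=\curl{\mathbf{u^s}}$ and diagonalization of the matrix $A(x)$) and as a symmetric hyperbolic first-order system to obtain finite speed of propagation, precisely so that the Eller--Isakov--Nakamura--Tataru unique continuation theorem (Proposition \ref{thm:UC}) applies under the pseudo-convexity constraints on $a_1,a_2,\mu_1$. Without this, one cannot exclude an "invisible" source, i.e.\ a nonzero $\mathbf{f}$ with $\|K\mathbf{f}\|_B=\|\mathbf{f}\|_B$, and the Neumann series need not converge. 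Two smaller inaccuracies: your claim that the residual norm "equals the energy still contained in $\Omega$ at time $T$" overstates the identity --- one only has $\|\mathbf{w}(0)\|^2_B\le E_\Omega(0,\mathbf{w})=E_\Omega(T,\mathbf{w})\le E_\Omega(T,\mathbf{u})$, after discarding the kinetic term at $t=0$ and the term $\|\bm{\phi}\|^2_B$ (though the inequality goes in the useful direction); and your worry about the forward and backward evolutions "preserving the divergence-free gauge" is dissolved in the paper by defining $A$ and $\bm{\Lambda}$ on honest functions and composing with the canonical projection onto the quotient $B$ only at the end.
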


Our proof is based on the modified time reversal method proposed by Stefanov
and Uhlmann \cite{SU2009} for the thermo-acoustic tomography
(TAT) \cite{KK2008}. TAT is a hybrid modality in medical imaging where
optical or electromagnetic waves are exerted to trigger ultrasound wave in
tissue through thermo-elastic conversion. Conventional time
reversal method is known to provide an approximate reconstruction of the source
\cite{HKN2008}. Stefanov and Uhlmann improved it and obtained an accurate
reconstruction of the source by a Neumann series \cite{SU2009}  which was
numerically implemented in \cite{CLQ2014, QSUZ2011}. This improved time reversal
method has since been adapted and generalized to many other models
\cite{dHT2015, H2013, KN2017, P2016, SU2011, T2012}. We refer to \cite{B2012}
for a survey on more hybrid modalities in the context of medical imaging.

A slight variation of our approach can be used to reconstruct
$\mathbf{f(x)}$ when the source term takes the form
$L_0\frac{\eta}{\kappa}E=\delta'(t)\mathbf{f(x)}$. Indeed, it follows
from Duhamel's principle that we have 
\[
\left\{
\begin{array}{rcl}
M \partial^2_t \mathbf{u} + P(D) \mathbf{u} & = & \mathbf{0} \quad\quad\quad
\text{ in } \mathbb{R}\times\mathbb{R}^3, \\
\mathbf{u}|_{t=0} & = & \mathbf{0}, \\
\partial_t \mathbf{u}|_{t=0} & = & \mathbf{f(x)}.
\end{array}
\right.
\]
If $\mathbf{u}$ solves this problem, the $\partial_t\mathbf{u}$ solves
\eqref{forward} by \cite{SU2013}. Hence one can
first reconstruct $\partial_t\mathbf{u}$ using our approach and then integrate
to obtain $\mathbf{u}$.

The rest of the paper is organized as follows. In Section \ref{sec:system} we
convert Biot's equations (without attenuation) to two hyperbolic systems.
These systems are used in Section \ref{sec:FSPUC} to prove finite speed of
propagation and unique continuation results. Section \ref{sec:funcspace} is
devoted to discussion of function spaces. The main theorem is stated
and proved in Section \ref{sec:mainthm}.

\section{Biot's equations} \label{sec:system}

In this section we transform Biot's equations into two hyperbolic systems: a
principally scalar system and a symmetric hyperbolic system.

\subsection{The principally scalar system}

Let us start with the principally scalar system. Recall the definition of a
principally scalar system \cite{EINT2002}. For a function $a=a(x)$, define the
scalar wave operator $\Box_a:=a\partial^2_t-\Delta$. 
\begin{defi}
A principally scalar system refers to
\begin{equation} \label{pss}
\Box_{a_j}u_j + b_j(t,x,\nabla u) + c_j(t,x,u) = f_j,\quad\quad j=1,\dots,m.
\end{equation}  
Here $u=(u_1,\dots,u_m)$, $a_j=a_j(x)\in C^1$ are real-valued functions,
$b_j$ and $c_j$ are linear functions with $L^\infty_{loc}$-coefficients of
$\nabla u$ and $u$. 
\end{defi}

The system is called principally scalar as the principal part of each equation
is a scalar wave operator. We shall see in Section \ref{sec:FSPUC} 
that a principally scalar systems can be uniquely continued towards inside from
proper boundary data. This unique continuation property is crucial to obtain
Proposition \ref{thm:BiotUC}.

We can write the equation in \eqref{forward} as a principally scalar system
by following the procedures in \cite{BY2013}.
Let $\rho(x)=\rho_{11}(x)\rho_{22}(x)-\rho^2_{12}(x)$. We have $\rho>0$ by
(\textbf{H2}) and
$$M^{-1} = 
\frac{1}{\rho}
\left(
\begin{array}{cc}
\rho_{22} I_3 & -\rho_{12} I_3 \\
-\rho_{12} I_3 & \rho_{11} I_3 
\end{array}
\right).
$$
Multiplying the equation in \eqref{forward} by $M^{-1}$ to get
$$
\partial^2_t
\left(
\begin{array}{c}
 \mathbf{u^s} \\
 \mathbf{u^f}
\end{array}
\right)
+ M^{-1}
P(D)\left(
\begin{array}{c}
 \mathbf{u^s} \\
 \mathbf{u^f}
\end{array}
\right)
=
\left(
\begin{array}{c}
 0 \\
 0
\end{array}
\right),
$$
where
$$
M^{-1}
P(D)=
\frac{1}{\rho}
\left(
\begin{array}{cc}
-\rho_{22}\Delta_{\mu,\lambda}\cdot + \rho_{12}\nabla(q\,\diver{\cdot}) &
-\rho_{22}\nabla(q\,\diver{\cdot})+\rho_{12}\nabla(r\,\diver{\cdot}) \\
\rho_{12}\Delta_{\mu,\lambda}\cdot - \rho_{11}\nabla(q\,\diver{\cdot}) &
\rho_{12}\nabla(q\,\diver{\cdot})-\rho_{11}\nabla(r\,\diver{\cdot}).
\end{array}
\right).
$$

Now, we write the matrix equation as a system of equations and move all the
first and zeroth order derivatives to the right hand side to get
\begin{equation} \label{newBiot1}
\left\{
\begin{array}{rcl}
\partial^2_t \mathbf{u^s} - \mu_1 \Delta \mathbf{u^s} -
(\mu_1+\lambda_1)\nabla\diver\mathbf{u^s} - q_1\nabla\diver\mathbf{u^f} & = &
\mathscr{P}(\mathbf{u^s}, \mathbf{u^f}),\\
\partial^2_t \mathbf{u^f} + \mu_2\Delta\mathbf{u^s} -r_2\nabla(\diver
\mathbf{u^f}) - q_2\nabla(\diver \mathbf{u^s}) & = & \mathscr{Q}(\mathbf{u^s},
\mathbf{u^f}),
\end{array}
\right.
\end{equation}
where the new coefficients are
$$\mu_1=\rho^{-1}\rho_{22}\mu,\quad
\lambda_1=\rho^{-1}(\rho_{22}\lambda-\rho_{12}q), \quad
q_1=\rho^{-1}(\rho_{22}q-\rho_{12}r),$$
$$\mu_2=\rho^{-1}\rho_{12}\mu,\quad r_2=\rho^{-1}(\rho_{11}r-\rho_{12}q),\quad
q_2=\rho^{-1}(\rho_{11}q-\rho_{12}(\mu+\lambda)).$$
Here and below, the script letters $\mathscr{P}, \mathscr{Q}, \mathscr{R},
\dots$ denote various first-order linear differential operators.

A straightforward calculation shows that first order derivatives in
$\mathscr{P}(\mathbf{u^s}, \mathbf{u^f})$ and $\mathscr{Q}(\mathbf{u^s},
\mathbf{u^f})$ appear in terms of $\nabla\mathbf{u^s}$,
$(\nabla\mathbf{u^s})^T$, $\diver\mathbf{u^s}$, and $\diver\mathbf{u^f}$. The
right hand side can be simplified by the substitutions as indicated below.

Introduce the substitutions:
\begin{equation} \label{vs}
v^s=\diver\mathbf{u^s}, \quad v^f=\diver\mathbf{u^f},\quad
\mathbf{v^s}=\curl\mathbf{u^s}.
\end{equation} 
Applying $\diver$ to \eqref{newBiot1} and utilizing the relations 
$$
\diver\nabla\mathbf{u^s} = \Delta\mathbf{u^s} = \nabla(\diver\mathbf{u^s}) -
\curl\;\curl\;\mathbf{u^s}, \quad\diver(\nabla \mathbf{u^s})^T =
\nabla(\diver \mathbf{u^s}),
$$
we get
\[
\left\{
\begin{array}{rcl}
\partial^2_t v^s - a_{11} \Delta v^s - a_{12} \Delta v^f & = & \mathscr{R}(v^s,v^f,\mathbf{u^s},\mathbf{v^s}) \\
\partial^2_t v^f - a_{21}\Delta v^s - a_{22}\Delta v^f & = & \mathscr{S}(v^s,v^f,\mathbf{u^s},\mathbf{v^s});
\end{array}
\right.
\]
or equivalently
$$
\partial^2_t
\left(
\begin{array}{c}
 v^s \\
 v^f
\end{array}
\right)
-
A(x)\Delta
\left(
\begin{array}{c}
 v^s \\
 v^f
\end{array}
\right)
= \mathscr{L}(v^s,v^f,\mathbf{u^s},\mathbf{v^s}),
$$
where the coefficient $A(x)$ is
\[
A(x) :=
\left(
\begin{array}{cc}
 a_{11} & a_{12} \\
 a_{21} & a_{22}
\end{array}
\right)
=
\frac{1}{\rho}
\left(
\begin{array}{cc}
 \rho_{22} & -\rho_{12} \\
 -\rho_{12} & \rho_{11}
\end{array}
\right).
\left(
\begin{array}{cc}
 2\mu+\lambda & q \\
 q & r
\end{array}
\right).
\]
The hypotheses (\textbf{H1})--(\textbf{H3}) ensures that $A$ is a symmetric
positive definite matrix. Let $a_1, a_2$ be its eigenvalues,  
then there exists a non-singular matrix $Q(x)$ such that
$$
(Q^{-1}AQ)(x) = \text{Diag}\,(a_1,a_2)(x).
$$
Making the change of variable 
\[
(\tilde{v}^s,\tilde{v}^f)=Q^{-1}(v^s,v^f)
\]
yields that $(\tilde{v}^s,\tilde{v}^f)$ solves
$$
\left\{
\begin{array}{rcl}
\partial^2_t \tilde{v}^s - a_1 \Delta \tilde{v}^s & = &
\mathscr{M}(\tilde{v}^s,\tilde{v}^f,\mathbf{u^s},\mathbf{v^s}), \\
\partial^2_t \tilde{v}^f - a_2 \Delta \tilde{v}^f & = &
\mathscr{N}(\tilde{v}^s,\tilde{v}^f,\mathbf{u^s},\mathbf{v^s}).
\end{array}
\right.
$$

Applying $\curl$ to the first equation of \eqref{newBiot1} and using
$\curl\nabla\mathbf{u}=0$ gives
$$
\partial^2_t \mathbf{v^s} - \mu_1\Delta \mathbf{v^s} = \mathscr{T}(\tilde{v}^s,\tilde{v}^f,\mathbf{u^s},\mathbf{v^s}).
$$
The first equation of \eqref{newBiot1} can be written as
$$
\partial^2_t \mathbf{u^s} - \mu_1\Delta \mathbf{u^s} = \mathscr{U}(\tilde{v}^s,\tilde{v}^f,\mathbf{u^s},\mathbf{v^s}).
$$
Thus we obtain the following principally scalar system in the variables
$(\tilde{v}^s, \tilde{v}^f, \mathbf{u^s},\mathbf{v^s})$:
\begin{equation} \label{pssystem}
\left\{
\begin{array}{rcl}
\Box_{\frac{1}{a_1}} \tilde{v}^s -
\frac{1}{a_1}\mathscr{M}(\tilde{v}^s,\tilde{v}^f,\mathbf{u^s},\mathbf{v^s}) & =
& 0,\\
\Box_{\frac{1}{a_2}} \tilde{v}^f -
\frac{1}{a_2}\mathscr{N}(\tilde{v}^s,\tilde{v}^f,\mathbf{u^s},\mathbf{v^s}) & =
& 0,\\
\Box_{\frac{1}{\mu_1}} \mathbf{u^s} -
\frac{1}{\mu_1}\mathscr{T}(\tilde{v}^s,\tilde{v}^f,\mathbf{u^s},\mathbf{v^s}) &
= & 0,\\
\Box_{\frac{1}{\mu_1}} \mathbf{u^f} -
\frac{1}{\mu_1}\mathscr{U}(\tilde{v}^s,\tilde{v}^f,\mathbf{u^s},\mathbf{v^s}) &
= & 0. 
\end{array}
\right.
\end{equation}
This is the desired principally scalar system. Note that $a_1, a_2, \mu_1$ are
smooth and strictly positive by (\textbf{H1}), hence their reciprocals exist and
are smooth as well. Another observation is that $(\tilde{v}^s,\tilde{v}^f)=0$ if
and only if $(v^s,v^f)=0$.

\subsection{The symmetric hyperbolic system}

We proceed to write the principally scalar system \eqref{pssystem} as a first
order symmetric hyperbolic system \cite{R2012}, which will be exploited to
show that the Biot equations have finite speed of propagation. We restrict to
the case where the coefficients are time-independent matrices.

For $x\in\mathbb{R}^3$, let $A_0(x), \dots, A_{4}(x)$ be matrix-valued
functions. Denote by $\mathcal{L}$ the partial differential operator 
\begin{equation} \label{symhyper}
\mathcal{L}(x,\partial_t,\partial_x)= A_0(x)\partial_t + \sum^3_{j=1} A_j(x)
\partial_{x_j} + A_{4}(x),
\end{equation}
where the coefficient matrices $A_0, \dots, A_4$ are assumed to have uniformly
bounded derivatives, i.e., 
$$\sup_{x\in\mathbb{R}^3}\left\| \partial^\alpha_{x}\left(
A_0(x),\dots, A_{4}(x)\right) \right\| < \infty\quad \text{ for any
multi-index } \alpha.$$

\begin{defi}
$\mathcal{L}$ is called symmetric hyperbolic if the following conditions hold:
\begin{enumerate}[(i)]
\item $A_0(x), \dots, A_3(x)$ are symmetric;
\item $A_0$ is strictly positive, i.e., there is a constant $C>0$ such that for
all $x\in\mathbb{R}^3$,
$$A_0(x) \geq CI,$$
where $I$ is the identity matrix.
\end{enumerate}
\end{defi}

Our goal is to convert the principally scalar system \eqref{pssystem} to a
symmetric hyperbolic system. We begin with a scalar wave equation to demonstrate
the procedures. The principally scalar system is treated afterwards.

Let $v(x)$ be a scalar function defined in $\mathbb{R}^3$ and satisfy the equation
\begin{equation} \label{scalar}
\partial^2_t v - c(x)\Delta v = \mathscr{L}(\partial_{x_1}v, \partial_{x_2}v,
\partial_{x_3}v, v),
\end{equation}
where $\mathscr{L}(\partial_{x_1}v, \partial_{x_2}v, \partial_{x_3}v, v)$ is
linear in each derivative. We define a vector $V(v)$: 
$$
V(v)=(V_0,V_1, V_2, V_3, V_4):=(\partial_t v, \partial_{x_1}v, \partial_{x_2}v, \partial_{x_3}v, v).
$$
The scalar equation \eqref{scalar} can be written in terms of $V_0, \dots,
V_{4}$ as follows:
$$ 
\left\{
\begin{array}{rcl}
\partial_t V_0 - a_1 \partial_{x_1} V_1 - a_1 \partial_{x_2} V_2 - a_1
\partial_{x_3} V_3 - \mathscr{L}(V_1, \dots, V_{4}) & = & 0, \\
a_1 \partial_t V_1 - a_1 \partial_{x_1} V_0 & = & 0, \\
a_1 \partial_t V_2 - a_1 \partial_{x_2} V_0 & = & 0, \\
a_1 \partial_t V_3 - a_1 \partial_{x_3} V_0 & = & 0, \\
a_1 \partial_t V_{4} - a_1 V_0 & = & 0. 
\end{array}
\right.
$$
In the matrix form, this system reads
$$
B_0(x) \partial_t V + B_1(x) \partial_{x_1} V + B_2(x) \partial_{x_2} V + B_3(x) \partial_{x_3} V + B_{4}(x) V = 0,
$$
where $B_0, \dots, B_{4}$ are $5 \times 5$ matrices. More explicitly, 
$B_0=\text{Diag}(1, a_1, a_1, a_1, a_1)$, $B_{4}$ is determined by the
concrete form of $\mathscr{L}$, and the other matrices are
$$
B_i(x) := 
\left\{
\begin{array}{ll}
-a_1 & \text{ for } (1,i+1) \text{ and } (i+1,1) \text{ entry} \\
0 & \text{ for other entries }
\end{array}
\right., \quad
i=1, 2, 3.
$$
Note this is a symmetric hyperbolic system since $B_0, \dots, B_3$ are
symmetric matrices and $B_0$ is strictly positive.

Now we turn to the principally scalar system \eqref{pssystem}. As each equation
in the system takes the form \eqref{scalar}, we define a vector $U$, which has
$40$ components and is obtained by juxtaposing $V(v)$ with $v$ replaced
successively by the components of
$(\tilde{v}^s,\tilde{v}^f,\mathbf{u^s},\mathbf{v^s})$, i.e.,
$$
U=(U_1,\dots,U_{40}):=(V(\tilde{v}^s), V(\tilde{v}^f), V(\mathbf{u^s_1}), V(\mathbf{u^s_2}), V(\mathbf{u^s_3}), V(\mathbf{v^s_1}), V(\mathbf{v^s_2}), V(\mathbf{v^s_3})).
$$
Since the principal part of each equation in \eqref{pssystem} is uncoupled, one
can write \eqref{pssystem} as a first order system in a similar manner:
\begin{equation} \label{Biotsymhyper}
A_0(x) \partial_t U + A_1(x) \partial_{x_1} U + A_2(x) \partial_{x_2} U + A_3(x) \partial_{x_3} U + A_{4}(x) U = 0.
\end{equation}
Here each $A_i$ is a $40 \times 40$ matrix, $A_0$ represents the diagonal matrix
$$
\text{Diag}\, ( (1,a_1, a_1, a_1, a_1), (1,a_2, a_2, a_2, a_2), (1,\mu_1, \mu_1, \mu_1, \mu_1), \dots, (1,\mu_1, \mu_1, \mu_1, \mu_1) )
$$
which is strictly positive, and $A_1, A_2, A_3$ are symmetric. This is the
desired symmetric hyperbolic system that is equivalent to the principally scalar
system \eqref{pssystem} and the Biot equations in \eqref{forward}.

\section{Finite speed of propagation and unique continuation} \label{sec:FSPUC}

We derive some results for the Biot equations regarding finite speed of
propagation and unique continuation. It is crucial to have the symmetric
hyperbolic system \eqref{Biotsymhyper} and the principally scalar system
\eqref{pssystem}.

\subsection{Finite speed of propagation}

Let $\mathcal{L}$ be the symmetric hyperbolic operator defined as in
\eqref{symhyper}. For any $\xi\in\mathbb{R}^3 \backslash\{0\}$, let
$$\Lambda(\xi)=\inf\{\ell:
A_0(x)^{-\frac{1}{2}}\left(\sum^{3}_{j=1}A_j(x)\xi_j\right)A_0(x)^{-\frac{1}{2}}
\leq \ell I\},$$
which is the smallest upper bound of the eigenvalues for 
$A_0^{-\frac{1}{2}}\left(\sum^{3}_{j=1}A_j\xi_j\right)A_0^{-\frac{1}{2}
}$.
We state the following result, which shows that the solution of a symmetric
hyperbolic system has finite speed of propagation.

\begin{prop}{\cite[Theorem 2.3.2]{R2012}} \label{prop:FSP}
Suppose $s\in\mathbb{R}$ and $u\in C([0,\infty);H^s(\mathbb{R}^3))$ satisfies a
symmetric hyperbolic system $\mathcal{L}u=0$. If 
$$\supp u(0,x)\subset \{x\in\mathbb{R}^3: x\cdot\xi\leq 0\},$$
then for $t\geq 0$,
$$\supp u(t,x)\subset \{x\in\mathbb{R}^3: x\cdot\xi\leq \Lambda(\xi)t\}.$$
In particular, set $c_{max}:=\displaystyle\max_{|\xi|=1} \Lambda(\xi)$, if
$$\supp u(0,x)\subset \{x\in\mathbb{R}^3: |x|\leq R\},$$
then
$$\supp u(t,x)\subset \{x\in\mathbb{R}^3: |x|\leq R+c_{max}|t|\}.$$
\end{prop}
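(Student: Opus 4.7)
The plan is to prove this by a standard moving-region energy estimate for symmetric hyperbolic systems. First, I would take the Euclidean inner product of $\mathcal{L}u=0$ with $u$ and exploit the symmetry of $A_0,A_1,A_2,A_3$ (together with the time-independence of the coefficients) to obtain the divergence identity
\begin{equation*}
\partial_t(u\cdot A_0 u)+\sum_{j=1}^{3}\partial_{x_j}(u\cdot A_j u)=u\cdot\Big(\sum_{j=1}^{3}(\partial_{x_j}A_j)-2A_4\Big)u.
\end{equation*}
The right-hand side is bounded in absolute value by $C\,u\cdot A_0 u$ pointwise, thanks to the uniform boundedness of the derivatives of $A_0,\dots,A_4$ together with the uniform positivity hypothesis $A_0\geq CI$.

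Next, for the fixed direction $\xi$, introduce the moving half-space $\Omega_t:=\{x\in\mathbb{R}^3:x\cdot\xi>\Lambda(\xi)t\}$ and the local energy
\begin{equation*}
E(t):=\int_{\Omega_t} u(t,x)\cdot A_0(x)u(t,x)\,dx.
\end{equation*}
By the support hypothesis, $E(0)=0$. A Reynolds-transport computation (differentiating under the integral, accounting for the boundary moving in the direction $\xi/|\xi|$ with speed $\Lambda(\xi)/|\xi|$) combined with the divergence identity above yields
\begin{equation*}
E'(t)=\frac{1}{|\xi|}\int_{\partial\Omega_t} u\cdot\Big(\sum_{j=1}^{3}A_j\xi_j-\Lambda(\xi)A_0\Big)u\,dS+\int_{\Omega_t} u\cdot\Big(\sum_{j=1}^{3}(\partial_{x_j}A_j)-2A_4\Big)u\,dx.
\end{equation*}
The definition of $\Lambda(\xi)$ is precisely the assertion that $\sum_j A_j\xi_j\leq\Lambda(\xi)A_0$ as quadratic forms (conjugate by $A_0^{1/2}$ to see this), so the boundary integral is nonpositive. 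Hence $E'(t)\leq CE(t)$ for $t\geq 0$, and Gronwall's inequality with $E(0)=0$ forces $E(t)\equiv 0$, i.e.\ $u\equiv 0$ on $\Omega_t$ for all $t\geq 0$.

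For the ball statement, I would apply the half-space result with shifted hyperplanes: for every unit vector $\xi$, the hypothesis $\supp u(0)\subset\{|x|\leq R\}$ implies $\supp u(0)\subset\{x\cdot\xi\leq R\}$, and the same energy argument carried out on the region $\{x\cdot\xi>R+\Lambda(\xi)t\}$ shows $\supp u(t)\subset\{x\cdot\xi\leq R+\Lambda(\xi)t\}\subset\{x\cdot\xi\leq R+c_{max}t\}$; intersecting over all unit $\xi$ produces the closed ball of radius $R+c_{max}t$. The extension to $t<0$ follows by reversing the time variable, which preserves the symmetric-hyperbolic structure. The main technical obstacle I expect is justifying the pointwise energy identity when the regularity index $s$ is small: for arbitrary $s\in\mathbb{R}$ the function $u$ is only a distribution in $x$, so rigorously one first mollifies the initial data, applies the estimate to the resulting smooth solution (whose existence, uniqueness, and continuous dependence are standard for symmetric hyperbolic systems), and then passes to the limit; a smooth cutoff approximating the characteristic function of $\Omega_t$ can be used to legitimately differentiate $E(t)$, producing only error terms that vanish in the limit.
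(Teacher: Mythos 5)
The paper offers no proof of this proposition: it is quoted verbatim from Rauch \cite[Theorem 2.3.2]{R2012}, so there is nothing internal to compare against. Your argument is correct and is essentially the standard energy-method proof underlying that citation --- the divergence identity from symmetry, the moving half-space energy $E(t)$ whose boundary term is nonpositive precisely by the definition of $\Lambda(\xi)$ as the sharp bound for $A_0^{-1/2}\bigl(\sum_j A_j\xi_j\bigr)A_0^{-1/2}$, Gronwall, intersection of shifted half-spaces for the ball statement, and mollification to handle low regularity $s$ --- so no gap to report.
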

\noindent Here $|\xi|$ denotes the Euclidean norm of $\xi$. If $u$ is a vector,
$\supp{u}$ stands for the union of the supports of its components.

Using this proposition, one can deduce finite speed of propagation for
the solutions of Biot's equation. Let $c_{max}$ be the number in
Proposition \ref{prop:FSP} with $\mathcal{L}$ defined on the left hand side of
\eqref{Biotsymhyper}.

\begin{coro} \label{thm:BiotFSP}
Let $\mathbf{u}=(\mathbf{u^s},\mathbf{u^f})$ solve $M \partial^2_t \mathbf{u} + P(D) \mathbf{u} = \mathbf{0}$. If
$$\supp{(\mathbf{u^s}(0,\cdot), \diver{\mathbf{u^f}}(0,\cdot))}\subset \{x\in\mathbb{R}^3: |x|\leq r\},$$
then
$$\supp{(\mathbf{u^s}(t,\cdot), \diver\mathbf{u^f}(t,\cdot))}\subset \{x\in\mathbb{R}^3: |x|\leq r+c_{max}|t|\}.$$
\end{coro}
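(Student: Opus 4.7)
The plan is to apply Proposition~\ref{prop:FSP} to the symmetric hyperbolic system~\eqref{Biotsymhyper} satisfied by the 40-component vector
$U=(V(\tilde v^s), V(\tilde v^f), V(\mathbf{u^s_1}), V(\mathbf{u^s_2}), V(\mathbf{u^s_3}), V(\mathbf{v^s_1}), V(\mathbf{v^s_2}), V(\mathbf{v^s_3}))$, and then translate the resulting support control back to $(\mathbf{u^s}, \diver\mathbf{u^f})$. The hypotheses \textbf{H1}--\textbf{H3} and standard well-posedness give $U\in C([0,\infty); H^s(\mathbb{R}^3))$ for sufficiently large $s$, so Proposition~\ref{prop:FSP} indeed applies to $U$.

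First I would control the initial support of $U$. Each block $V(v)$ consists of the function itself, its three spatial partial derivatives, and its first time derivative, all evaluated at $t=0$. Since $\mathbf{u^s}(0,\cdot)$ is supported in $\{|x|\le r\}$, the same is true for the spatial derivatives $v^s(0,\cdot)=\diver\mathbf{u^s}(0,\cdot)$ and $\mathbf{v^s}(0,\cdot)=\curl\mathbf{u^s}(0,\cdot)$, as well as all their spatial derivatives. The hypothesis on $\diver\mathbf{u^f}(0,\cdot)$ controls $v^f(0,\cdot)$, and since the change of variables $(\tilde v^s,\tilde v^f)=Q^{-1}(x)(v^s,v^f)$ is pointwise, the initial values of $\tilde v^s,\tilde v^f$ and their spatial derivatives are supported in the same ball. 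The remaining entries of $U(0,\cdot)$ are the first-time-derivative entries $\partial_t\mathbf{u^s}(0,\cdot)$, $\partial_t\mathbf{v^s}(0,\cdot)$, $\partial_t\tilde v^s(0,\cdot)$, $\partial_t\tilde v^f(0,\cdot)$; these are built into the corollary's hypothesis and are automatic in the forward problem~\eqref{forward} where $\partial_t\mathbf{u}|_{t=0}=\mathbf{0}$ forces each of them to vanish identically. Putting these observations together, $\supp U(0,\cdot)\subset\{|x|\le r\}$.

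Proposition~\ref{prop:FSP} then yields $\supp U(t,\cdot)\subset \{|x|\le r+c_{max}|t|\}$ for $t\ge 0$, and the time-reversal symmetry $t\mapsto -t$ of the second-order Biot system handles $t<0$. To extract the stated conclusion, note that $\mathbf{u^s}$ appears directly among the coordinates of $U$, so its support is immediately contained in the enlarged ball. Inverting the pointwise transformation $Q$ expresses $\diver\mathbf{u^f}=v^f$ as a pointwise linear combination of the components $\tilde v^s$ and $\tilde v^f$ of $U$, so its support is also contained in $\{|x|\le r+c_{max}|t|\}$.

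The only real obstacle is bookkeeping: confirming that each of the three reductions carried out in Section~\ref{sec:system}---taking $\diver$ and $\curl$ of~\eqref{newBiot1} to obtain the principally scalar system~\eqref{pssystem}, the pointwise diagonalization by $Q$, and the passage to the first-order system~\eqref{Biotsymhyper}---respects spatial supports, and that the quantities of interest $(\mathbf{u^s}, \diver\mathbf{u^f})$ can be recovered pointwise from $U$. Both are straightforward from the explicit formulas, and the hypothesis \textbf{H1} keeps the pointwise matrices $Q(x)$, $Q^{-1}(x)$ bounded and smooth so no support is enlarged in the transfer.
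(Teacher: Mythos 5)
Your proposal is correct and takes essentially the same route as the paper's proof, which likewise constructs $(\tilde v^s,\tilde v^f,\mathbf{u^s},\mathbf{v^s})$ from $\mathbf{u}$, forms the $40$-component solution $U$ of the symmetric hyperbolic system \eqref{Biotsymhyper}, and applies Proposition~\ref{prop:FSP}; your version simply spells out the support bookkeeping that the paper's two-line proof leaves implicit. You are in fact slightly more careful than the paper on one point: the first-time-derivative entries of $U(0,\cdot)$ are not controlled by the corollary's literal hypothesis (contrary to your phrase ``built into the corollary's hypothesis''), and your observation that they vanish identically for solutions of \eqref{forward}, where $\partial_t\mathbf{u}|_{t=0}=\mathbf{0}$, is exactly the supplement needed to make the argument complete in the setting where the corollary is used.
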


\begin{proof}
Given $(\mathbf{u^s},\mathbf{u^f})$, one can construct the functions
$(\tilde{v}^s, \tilde{v}^f, \mathbf{u^s},\mathbf{v^s})$ as in \eqref{vs}, and
further the solution $U$ of \eqref{Biotsymhyper}. The proof is
completed by applying Proposition \ref{prop:FSP} to the symmetric
hyperbolic system \eqref{Biotsymhyper}. 
\end{proof}

\subsection{Unique continuation}

A principally scalar system satisfies certain unique continuation property
\cite{EINT2002}, which will be used as an intermediate step towards the
main theorem. Let $T'>0$ be a positive number, and let $D\subset\mathbb{R}^3$ be
a $C^2$-domain containing the origin. Denote by
$B(0;R)=\{x\in\mathbb{R}^3:|x|<R\}$ the ball of radius $R$ centered at the
origin. We state the following unique continuation result.

\begin{prop}{\cite[Corollary 3.5]{EINT2002}} \label{thm:UC}
Let $u=(u_1,\dots,u_m)$ be a solution of a general principally scalar system
\eqref{pss}. Suppose that there exists $\theta>0$ such that $D\subset B(0;\theta
T')$ and the coefficients $a_j$ in \eqref{pss} satisfy the constraints:
$$
\begin{array}{cl}
\theta^2 a_j (a_j + a_j^{-\frac{1}{2}} |t\nabla a_j|) < a_j +
\frac{1}{2}x\cdot\nabla a_j & \text{ in } (-T',T')\times\overline{D} \\
\theta^2 a_j \leq 1 & \text{ in } \overline{D}.
\end{array}
$$
Then $u=\partial_\nu u=0$ on $(-T',T')\times\partial D$ implies $u=0$ on $\{(t,x)\in(-T',T')\times D:|x|>\theta t\}$.
\end{prop}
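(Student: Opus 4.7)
The plan is to prove the unique continuation property via a Carleman estimate for the scalar wave operators $\Box_{a_j}$, combined with a foliation/cutoff argument that handles the coupling through the lower-order terms $b_j$ and $c_j$ of the principally scalar system.

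First I would construct a weight function adapted to the region $\{(t,x):|x|>\theta t\}$. A natural candidate is $\phi(t,x)=|x|^2-\theta^2 t^2$, possibly perturbed by a small convex correction to ensure strict pseudoconvexity. Its level sets $\{\phi=c\}$ foliate the region of interest; they are spacelike inside $D$ and meet the lateral surface $(-T',T')\times\partial D$ transversally, so that Cauchy data on $\partial D$ can be used to close the argument.

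Next I would verify that the two inequalities in the hypothesis are precisely the pseudoconvexity conditions for $\phi$ relative to the principal symbol $p_j(t,x,\tau,\xi)=-a_j(x)\tau^2+|\xi|^2$. The pointwise bound $\theta^2 a_j\le 1$ ensures that the level sets $\{\phi=c\}$ are non-characteristic for $\Box_{a_j}$ on $\overline{D}$. The strict inequality
\[
\theta^2 a_j\bigl(a_j+a_j^{-1/2}|t\nabla a_j|\bigr)<a_j+\tfrac12 x\cdot\nabla a_j
\]
is what one obtains after computing the iterated Poisson bracket $\{p_j,\{p_j,\phi\}\}$ on the characteristic set $\{p_j=0\}\cap\{H_{p_j}\phi=0\}$ and demanding positivity: the left side collects the harmful contribution from derivatives of $a_j$, the right side the favorable quadratic-weight contribution. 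This pseudoconvexity yields a standard local Carleman estimate of the form
\[
\tau\int e^{2\tau\phi}\bigl(\tau^2|u|^2+|\nabla_{t,x}u|^2\bigr)\,dx\,dt \;\le\; C\int e^{2\tau\phi}|\Box_{a_j}u|^2\,dx\,dt,
\]
valid for $u$ compactly supported in a neighborhood of the relevant region and $\tau\ge\tau_0$.

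Then I would apply the Carleman estimate componentwise to $\chi u_j$, where $\chi$ is a cutoff equal to one on $\{\phi\ge c_1\}\cap\{|x|>\theta t\}$ and supported in $\{\phi\ge c_0\}$ with $c_0<c_1$. Because $b_j(t,x,\nabla u)+c_j(t,x,u)$ is of order at most one with $L^\infty_{\mathrm{loc}}$ coefficients, the coupling of all components on the right-hand side is absorbed into the left-hand side for $\tau$ sufficiently large. The commutator $[\Box_{a_j},\chi]u_j$ is supported in the shell $\{c_0\le\phi\le c_1\}$; the boundary contributions on $(-T',T')\times\partial D$ vanish by the Cauchy data hypothesis, and the remaining shell term is $O(e^{2\tau c_1})$ while the left-hand side is $\ge C e^{2\tau c}$ for any $c>c_1$ inside the good region. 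Letting $\tau\to\infty$ forces $u_j\equiv 0$ on $\{\phi>c_1\}$, and sweeping $c_1$ down to the minimal level compatible with pseudoconvexity covers all of $\{|x|>\theta t\}$.

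The main obstacle will be the delicate choice and analysis of $\phi$: it must simultaneously be strictly pseudoconvex for every $\Box_{a_j}$ in $(-T',T')\times\overline{D}$ (which is exactly what the hypotheses encode), have level sets that separate the boundary-data region $(-T',T')\times\partial D$ from the target region $\{|x|>\theta t\}$ so that the cutoff argument yields one-sided propagation, and interact correctly with the lower-order coupling so that the scalar Carleman estimates can be summed over components without the cross-terms defeating the gain in $\tau$. The quadratic weight $|x|^2-\theta^2 t^2$ is the natural candidate precisely because the algebraic inequality in the hypothesis emerges from its Hessian structure, but a small perturbation and a careful commutator bookkeeping are typically required to make everything strict.
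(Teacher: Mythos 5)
The paper itself gives no proof of this proposition --- it is imported verbatim from \cite[Corollary 3.5]{EINT2002} --- and your Carleman-estimate sketch with the (convexified) weight built from $|x|^2-\theta^2 t^2$ reconstructs precisely the argument of that reference, consistent with the paper's own remark that the two inequality hypotheses express exactly the pseudo-convexity of the phase function with respect to the wave operators $\Box_{a_j}$ together with the non-characteristic condition, with the first-order coupling terms $b_j$, $c_j$ absorbed for large $\tau$ as you describe. The one ingredient your sketch leaves implicit is the role of the hypothesis $D\subset B(0;\theta T')$: it forces the relevant level sets of the weight to exit through the lateral boundary $(-T',T')\times\partial D$, where the Cauchy data vanish, rather than through the temporal ends $t=\pm T'$, and this is what lets the cutoff/sweeping argument close without assuming any data at $t=\pm T'$.
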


The inequality constraints in the theorem justify the
pseudo-convexity of a certain phase function with respect to the wave operators
$\Box_{a_j}$  \cite{EINT2002}. Observe that if $a_j$ are
positive constants, the above constraints reduce to $\theta^2 a_j<1$ in
$\overline{D}$. In this case, any $\theta>0$ that is less than
$\min\{\frac{1}{\sqrt{a_j}}:j=1,\cdots,m\}$ fulfills the inequalities.

Next proposition is the main result of this section. Choose $R>0$ sufficiently
large such that $\Omega\subset B(0;R)$. Recall that $c_{max}$ is defined in
Corollary \ref{prop:FSP}.

\begin{prop} \label{thm:BiotUC}
Let $\mathbf{u}=(\mathbf{u^s}, \mathbf{u^f})$ be the solution of the forward
problem \eqref{forward} with $\mathbf{f}$ compactly supported in $\Omega$.  
Suppose there exists $\theta>0$ such that $B(0;R+c_{max}\frac{3T}{2})\subset
B(0;\theta T)$ and the following inequality constraints hold when $a$ is
replaced by $a_1(x)$, $a_2(x)$, $\mu_1(x)$ respectively:
$$
\begin{array}{cl}
\theta^2 a (a + a^{-\frac{1}{2}} |t\nabla a|) < a + \frac{1}{2}x\cdot\nabla a, & \text{ in } (-\frac{3T}{2},\frac{3T}{2})\times\overline{B(0;R+c_{max}\frac{3T}{2})} \vspace{1ex}\\
\theta^2 a \leq 1 & \text{ in } \overline{B(0;R+c_{max}\frac{3T}{2})}.
\end{array}
$$
If $\mathbf{u^s}(T,x) = \diver\mathbf{u^f}(T,x) = 0 \text{ for } x\in\mathbb{R}^3\backslash\Omega$,
then
$$
\mathbf{u^s}(0,x) = \diver\mathbf{u^f}(0,x) = 0 \quad\text{ for }
x\in\Omega,  x\neq 0,
$$
\end{prop}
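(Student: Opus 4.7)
The plan is to reduce to the principally scalar system \eqref{pssystem} in the composite variable $(\tilde{v}^s,\tilde{v}^f,\mathbf{u^s},\mathbf{v^s})$ and then invoke the general unique continuation Proposition \ref{thm:UC} on a suitable space-time cylinder. Since $\partial_t\mathbf{u}|_{t=0}=\mathbf{0}$, I would first extend $\mathbf{u}$ evenly about $t=0$, producing a solution on $\mathbb{R}\times\mathbb{R}^3$; by evenness the hypothesis at $t=T$ also holds at $t=-T$. Set $T':=\frac{3T}{2}$ and $D:=B(0;R+c_{\max}\frac{3T}{2})$.

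Next I would verify the lateral boundary hypothesis of Proposition \ref{thm:UC}. By Corollary \ref{thm:BiotFSP}, the supports of $\mathbf{u^s}(t,\cdot)$ and $\diver\mathbf{u^f}(t,\cdot)$ lie in $\overline{\Omega}+B(0,c_{\max}|t|)$ for all $t\in\mathbb{R}$. Since $\mathbf{v^s}=\curl\mathbf{u^s}$ is a spatial derivative of $\mathbf{u^s}$, and $(\tilde{v}^s,\tilde{v}^f)=Q^{-1}(\diver\mathbf{u^s},\diver\mathbf{u^f})$ is a smooth linear combination of the divergences, the full vector $(\tilde{v}^s,\tilde{v}^f,\mathbf{u^s},\mathbf{v^s})(t,\cdot)$ is likewise supported in that set, hence strictly inside $D$ whenever $|t|<T'$. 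Consequently it vanishes in an outside neighborhood of $\partial D$ throughout $(-T',T')$, so both the solution and its normal derivative are zero on $(-T',T')\times\partial D$. I would then apply Proposition \ref{thm:UC} to \eqref{pssystem}: the pseudo-convexity constraints on the wave coefficients $a_1,a_2,\mu_1$ are precisely those imposed in the hypothesis of Proposition \ref{thm:BiotUC}, and the inclusion $D\subset B(0;\theta T')$ follows from the assumed $D\subset B(0;\theta T)$ since $T<T'$. The conclusion gives $(\tilde{v}^s,\tilde{v}^f,\mathbf{u^s},\mathbf{v^s})(t,x)=0$ on $\{(t,x)\in(-T',T')\times D:|x|>\theta t\}$, and at $t=0$ this is vanishing for every $x\in D\setminus\{0\}$.

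Finally I would translate the resulting vanishing back to the Biot unknowns: $\mathbf{u^s}(0,x)=0$ is read off directly for $x\in\Omega\setminus\{0\}$, and the nonsingularity of $Q$ together with $\tilde{v}^s(0,x)=\tilde{v}^f(0,x)=0$ converts to $\diver\mathbf{u^s}(0,x)=\diver\mathbf{u^f}(0,x)=0$ on $\Omega\setminus\{0\}$, as claimed. The main obstacle is the second paragraph: one needs every component of the auxiliary vector (beyond $\mathbf{u^s}$ and $\diver\mathbf{u^f}$ directly controlled by Corollary \ref{thm:BiotFSP}) to inherit finite speed of propagation, which is what secures the lateral vanishing across the enlarged window $(-T',T')$. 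The hypothesis $\mathbf{u^s}(T,\cdot)=\diver\mathbf{u^f}(T,\cdot)=0$ outside $\Omega$ enters through the compatibility of this enlarged $D$ and $T'$ with the containment $D\subset B(0;\theta T)$ and the pseudo-convexity constraints on the extended time interval.
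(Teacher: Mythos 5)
Your reduction to the principally scalar system \eqref{pssystem}, the even extension about $t=0$, and the final translation back through $Q^{-1}$ all match the paper; the fatal problem is in the middle. As you set it up, the hypothesis $\mathbf{u^s}(T,x)=\diver\mathbf{u^f}(T,x)=0$ for $x\in\mathbb{R}^3\backslash\Omega$ is never actually used: on your enlarged cylinder $D=B(0;R+c_{max}\frac{3T}{2})$, the vanishing of the Cauchy data on $(-T',T')\times\partial D$ follows from finite speed of propagation applied to the compactly supported data at $t=0$ alone, and this holds for \emph{every} $\mathbf{f}$ supported in $\Omega$, whether or not the solution vanishes outside $\Omega$ at $t=T$. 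If the remaining steps were valid, you would conclude $\mathbf{f^s}=0$ and $\diver\mathbf{f^f}=0$ for every compactly supported source, which is absurd. Your closing claim that the $t=T$ hypothesis ``enters through the compatibility of the enlarged $D$ and $T'$ with $D\subset B(0;\theta T)$'' cannot rescue this: that containment is a condition on $\theta$, $R$, $c_{max}$, $T$ only and is completely independent of the measured data.

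The step that breaks is the application of Proposition \ref{thm:UC} on the big ball. Unique continuation from lateral Cauchy data can only penetrate inward at the wave speeds, and the pseudo-convexity constraints encode exactly this: the operators in \eqref{pssystem} are $\Box_{\frac{1}{a_1}}$, $\Box_{\frac{1}{a_2}}$, $\Box_{\frac{1}{\mu_1}}$, so the coefficients $a_j$ of Proposition \ref{thm:UC} are the reciprocals $\frac{1}{a_1}$, etc., and the constraint $\theta^2 a_j\le 1$ reads $\theta\le\sqrt{a_1}$, i.e., $\theta$ is bounded by the slowest wave speed, in particular $\theta\le c_{max}$. Then $D\subset B(0;\theta T')$ with your $D$ would force $R+c_{max}\frac{3T}{2}\le\theta\frac{3T}{2}\le c_{max}\frac{3T}{2}$, impossible for $R>0$: no admissible $\theta$ exists for the enlarged cylinder, precisely because a wave launched near the origin cannot be continued from a boundary it has not yet reached. (The proposition's literal wording, with $a$ replaced by $a_1,a_2,\mu_1$ and the big ball inside $B(0;\theta T)$, is what tempted you, but taking it at face value leads to the vacuous conclusion above, so it cannot carry your argument.) The paper instead applies Proposition \ref{thm:UC} with the \emph{smaller} ball $D=B(0;R+c_{max}\frac{T}{2})$, where the lateral Cauchy data on the long window $(-\frac{3T}{2},\frac{3T}{2})$ do not vanish for free: one needs three support estimates, namely finite speed from the $t=0$ data covering $|t|\le\frac{T}{2}$, finite speed forward and backward from the hypothesis at $t=T$ (via Corollary \ref{thm:BiotFSP}) covering $\frac{T}{2}\le t\le\frac{3T}{2}$, and its mirror at $t=-T$ supplied by the even extension covering the negative times. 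That patchwork on the cylinder $\{|x|=R+c_{max}\frac{T}{2}\}$ is exactly where the measurement hypothesis does its work, and it is the step your proof is missing.
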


\begin{proof}
In view of the initial conditions in \eqref{forward}, we can extend the solution
$\mathbf{u}$ as an even function of $t$ to $(-T,T)\times\Omega$. This extension
is denoted by $\mathbf{u}$ again.

Given $\mathbf{u^s}(T,x)=\diver{\mathbf{u^f}}(T,x)=0$ for
$x\in\mathbb{R}^3\backslash\Omega$, it follows from Corollary \ref{thm:BiotFSP}
that
$$
\mathbf{u^s}(t,x) = \diver{\mathbf{u^f}}(t,x) = 0 \quad\text{ in }
\{(t,x):|x|\geq R+c_{max}|t-T|\}.
$$
As $\mathbf{u}$ is an even function of $t$, we also have
$$
\mathbf{u^s}(t,x) = \diver{\mathbf{u^f}}(t,x) = 0 \quad\text{ in }
\{(t,x):|x|\geq R+c_{max}|t+T|\}.
$$
On the other hand, the fact that $\mathbf{f}$ has compact support in $\Omega$
implies 
$\mathbf{u^s}(0,x)=\diver{\mathbf{u^f}}(0,x)=0$ for
$x\in\mathbb{R}^3\backslash\Omega$. Hence by Corollary \ref{thm:BiotFSP}, we get
$$
\mathbf{u^s}(t,x) = \diver{\mathbf{u^f}}(t,x) = 0 \quad\text{ in }
\{(t,x):|x|\geq R+c_{max}|t|\}.
$$
Combing the above equations gives
$$
\mathbf{u^s}(t,x) = \diver{\mathbf{u^f}}(t,x) = 0 \quad\text{ on }
\{(t,x):|x|=R+c_{max}\frac{T}{2}, -\frac{3T}{2}<t<\frac{3T}{2}\}.
$$
Using Proposition \ref{thm:UC} with the choice $T':=\frac{3T}{2}$,
$D:=B(0,R+c_{max}\frac{T}{2})$ yields
$$
\mathbf{u^s}(0,x) = \diver\mathbf{u^f}(0,x) = 0 \quad \text{ for }
x\in\Omega,  x\neq 0,
$$
which completes the proof.
\end{proof}

\section{Energy Conservation} \label{sec:funcspace}

Define a Sobolev space
$$
H(\diver{};\Omega):=\{\mathbf{v^f}\in (L^2(\Omega))^3: \diver{\mathbf{v^f}}\in
L^2(\Omega)\},
$$
which is equipped with the norm
$$
\|\mathbf{v^f}\|^2_{H(\diver;\Omega)}:=\|\mathbf{v^f}\|^2_{L^2(\Omega)} + \|\diver{\mathbf{v^f}}\|^2_{L^2(\Omega)}.
$$
Consider the space 
$$
V:=\{\mathbf{v}=(\mathbf{v^s},\mathbf{v^f})\in (H^1(\Omega))^3\times
H(\diver;\Omega)\},
$$
which has the norm
$$
\|(\mathbf{v^s}, \mathbf{v^f})\|^2_{V}:=\|\mathbf{v^s}\|^2_{H^1(\Omega)} + \|\mathbf{v^f}\|^2_{H(\diver;\Omega)}.
$$

Introduce a symmetric bilinear form on $V$: 
\begin{align*}
B(\mathbf{v},\mathbf{w})=\int_\Omega
\left[\lambda\diver{\mathbf{v^s}}\cdot\diver{\mathbf{w^s}} + 2 \mu
\left(\epsilon(\mathbf{v^s}) : \epsilon(\mathbf{w}^s)\right) +
r\,\diver{\mathbf{v^f}}\cdot\diver{\mathbf{w^f}} \right] \,dx \\
+ \int_\Omega q \left[ \diver{\mathbf{v^f}}\cdot\diver{\mathbf{w^s}} +
\diver{\mathbf{v^s}}\cdot\diver{\mathbf{w^f}} \right] \,dx,
\end{align*}
where $\epsilon(\mathbf{v^s})=\frac{1}{2}(\nabla\mathbf{v^s} +
(\nabla\mathbf{v^s})^T)$, $A:B={\rm tr}(AB^\top)$ is the Frobenius inner
product of matrices $A$ and $B$.

It is clear to note that $B(\mathbf{v},\mathbf{v})\geq 0$ for all $\mathbf{v}\in
V$ in view of (\textbf{H3}); moreover, $B(\mathbf{v},\mathbf{v})=0$ if and only
if $\mathbf{v^s}=0$ and $\diver{\mathbf{v^f}}=0$. Thus $B$ induces a semi-norm
$\|\mathbf{v}\|_B:=(\mathbf{v},\mathbf{v})_B$. We relate the bilinear form $B$ to the differential operator $P(D)$.

\begin{lemma} \label{thm:BPD}
Suppose $\mathbf{v},\mathbf{w}\in V$ with $\mathbf{v}|_{\partial\Omega}=\mathbf{0}$ or $\mathbf{w}|_{\partial\Omega}=\mathbf{0}$, then
$$
B(\mathbf{v},\mathbf{w}) = (P(D)\mathbf{v},\mathbf{w})_{L^2(\Omega)} = (\mathbf{v},P(D)\mathbf{w})_{L^2(\Omega)}.
$$
\end{lemma}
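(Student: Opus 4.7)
The plan is a direct integration by parts. Since $B$ is manifestly symmetric in $\mathbf{v}$ and $\mathbf{w}$, once we establish the first equality $(P(D)\mathbf{v},\mathbf{w})_{L^2(\Omega)}=B(\mathbf{v},\mathbf{w})$, the second equality follows by interchanging the roles of $\mathbf{v}$ and $\mathbf{w}$ (also legitimate because the hypothesis is symmetric: we only need one of them to vanish on $\partial\Omega$). So the whole proof reduces to computing $(P(D)\mathbf{v},\mathbf{w})_{L^2(\Omega)}$ and matching it term by term with $B(\mathbf{v},\mathbf{w})$.

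Unpacking $P(D)\mathbf{v}=(-\Delta_{\mu,\lambda}\mathbf{v^s}-\nabla(q\diver\mathbf{v^f}),\,-\nabla(q\diver\mathbf{v^s})-\nabla(r\diver\mathbf{v^f}))$, I would treat the four blocks separately. For the elastic block, use the definition \eqref{elasticop} of $\Delta_{\mu,\lambda}$ together with the identity $(\nabla\mathbf{v^s}+(\nabla\mathbf{v^s})^T):\nabla\mathbf{w^s}=2\,\epsilon(\mathbf{v^s}):\epsilon(\mathbf{w^s})$ (by symmetrization) to obtain, after one integration by parts,
\[
-\int_\Omega\Delta_{\mu,\lambda}\mathbf{v^s}\cdot\mathbf{w^s}\,dx=\int_\Omega\bigl[2\mu\,\epsilon(\mathbf{v^s}):\epsilon(\mathbf{w^s})+\lambda\,\diver\mathbf{v^s}\diver\mathbf{w^s}\bigr]dx,
\]
modulo a boundary integral over $\partial\Omega$ whose integrand is linear in $\mathbf{v^s}$ and $\mathbf{w^s}$. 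For each of the three remaining blocks, one application of the divergence theorem to $-\nabla(q\diver\mathbf{v^f})\cdot\mathbf{w^s}$, $-\nabla(q\diver\mathbf{v^s})\cdot\mathbf{w^f}$ and $-\nabla(r\diver\mathbf{v^f})\cdot\mathbf{w^f}$ yields
$\int_\Omega q\diver\mathbf{v^f}\diver\mathbf{w^s}\,dx$, $\int_\Omega q\diver\mathbf{v^s}\diver\mathbf{w^f}\,dx$ and $\int_\Omega r\diver\mathbf{v^f}\diver\mathbf{w^f}\,dx$ respectively, up to boundary terms carrying a factor of $\mathbf{w^s}\cdot n$ or $\mathbf{w^f}\cdot n$ (or the corresponding $\mathbf{v}$-factor). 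Summing these four identities reproduces exactly the integrand of $B(\mathbf{v},\mathbf{w})$.

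The only subtle point is the vanishing of the boundary integrals. The solid component lies in $(H^1(\Omega))^3$, so a Dirichlet vanishing $\mathbf{v^s}|_{\partial\Omega}=0$ kills the traces that appear in the elastic block and in the cross terms carrying $\mathbf{w^s}$ (or analogously for $\mathbf{w^s}$ if the hypothesis is placed on $\mathbf{w}$). The fluid component lies only in $H(\diver;\Omega)$, so the natural trace available is the normal trace $\mathbf{v^f}\cdot n\in H^{-1/2}(\partial\Omega)$; this is precisely the quantity multiplying $q\diver\mathbf{w^s}$ and $r\diver\mathbf{w^f}$ in the boundary integrals produced by the last two blocks, so interpreting $\mathbf{v}|_{\partial\Omega}=\mathbf{0}$ as $\mathbf{v^s}|_{\partial\Omega}=0$ together with $\mathbf{v^f}\cdot n|_{\partial\Omega}=0$ is exactly what is required to discard all boundary contributions. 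With this convention the integration by parts above is rigorous (by the standard density of smooth vector fields with compact support in the relevant subspaces and by the continuity of the $H(\diver)$ normal trace).

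The main obstacle is therefore not analytic but conceptual: clarifying what is meant by the trace of $\mathbf{v}\in V$ on $\partial\Omega$, splitting it into a Dirichlet trace for the solid part and a normal trace for the fluid part, and checking that these are precisely the pairings that appear as boundary terms. Once that is in place, the identity $(P(D)\mathbf{v},\mathbf{w})_{L^2(\Omega)}=B(\mathbf{v},\mathbf{w})$ and its symmetric counterpart follow immediately.
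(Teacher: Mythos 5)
Your proposal follows essentially the same route as the paper's own proof: a direct integration by parts on the four blocks of $(P(D)\mathbf{v},\mathbf{w})_{L^2(\Omega)}$ (the paper handles the elastic block via the pointwise identity $\diver(\mu\epsilon(\mathbf{v^s}))\cdot\mathbf{w^s}=\diver\left(\mu\epsilon(\mathbf{v^s})\mathbf{w^s}\right)-\mu\epsilon(\mathbf{v^s}):\epsilon(\mathbf{w^s})$, equivalent to your symmetrization identity), with the second equality deduced from the symmetry of $B$ and the boundary integrals in \eqref{bdterm1}--\eqref{bdterm2} annihilated by the vanishing trace. Your additional discussion of how to interpret $\mathbf{v}|_{\partial\Omega}=\mathbf{0}$ for the $H(\diver;\Omega)$ component --- Dirichlet trace for $\mathbf{v^s}$, normal trace $\mathbf{v^f}\cdot\nu\in H^{-1/2}(\partial\Omega)$ for the fluid part, which is exactly the factor appearing in the boundary terms --- is a correct and welcome clarification of a point the paper passes over silently, but it does not alter the argument.
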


\begin{proof}
By symmetry, we only need to prove the first equality with the assumption that
$\mathbf{w}|_{\partial\Omega}=\mathbf{0}$. Recalling 
$\mathbf{w}=(\mathbf{w^s},\mathbf{w^f})^T$ and 
\[
P(D)\mathbf{v}=(-\Delta_{\mu,\lambda} \mathbf{v^s} -
\nabla(q\diver{\mathbf{v^f}}), - \nabla(q\diver{\mathbf{v^s}}) -
\nabla(r\diver{\mathbf{v^f}}))^T,
\]
we have
\begin{align} \label{innerprod}
(P\mathbf{v},\mathbf{w})_{L^2(\Omega)} = \int_\Omega \big[&-\Delta_{\mu,\lambda}
\mathbf{v^s} \cdot \mathbf{w^s}- \nabla(q\diver{\mathbf{v^f}}) \cdot
\mathbf{w^s}\notag\\
&- \nabla(q\diver{\mathbf{v^s}}) \cdot \mathbf{w^f} -
\nabla(r\diver{\mathbf{v^f}}) \cdot \mathbf{w^f} \big] \,dx.
\end{align}

To deal with the first integrand, we expand $\Delta_{\mu,\lambda}$ using \eqref{elasticop} to have
$$
\int_\Omega \left[-\Delta_{\mu,\lambda} \mathbf{v^s} \cdot \mathbf{w^s} \right]\,dx = -2 \int_\Omega \diver{(\mu\epsilon(\mathbf{v^s}))}\cdot \mathbf{w^s} \,dx - \int_\Omega \nabla(\lambda\diver{\mathbf{v^s}})\cdot \mathbf{w^s} \,dx.
$$
We claim that
$$
\diver{(\mu\epsilon(\mathbf{v^s}))}\cdot \mathbf{w^s} = \diver{\left(\mu\epsilon(\mathbf{v^s})\right) \mathbf{w^s}} - \mu\epsilon(\mathbf{v^s}):\epsilon(\mathbf{w^s}). 
$$
To justify this, we write
$\epsilon(\mathbf{v^s}):=(\epsilon_1,\epsilon_2,\epsilon_3)^T$, where
$\epsilon_1, \epsilon_2, \epsilon_3$ are the three rows; write
$\mathbf{w^s}=(\mathbf{w^s_1}, \mathbf{w^s_2}, \mathbf{w^s_3})^T$. Then
\begin{align*}
\diver{\left(\mu\epsilon(\mathbf{v^s})\right)}\cdot \mathbf{w^s} & =
\sum^{3}_{j=1} \diver{\left(\mu\epsilon_j\right)}\mathbf{w^s_j} = 
\sum^{3}_{j=1} \left[ \diver{\left(\mu \mathbf{w^s_j}\epsilon_j\right)} -
\mu\epsilon_j\cdot\nabla \mathbf{w^s_j} \right] \\
 & = \diver{\left(\mu\epsilon(\mathbf{v^s})\right) \mathbf{w^s}} - \mu\epsilon(\mathbf{v^s}):\epsilon(\mathbf{w^s}).
\end{align*}
Using the integration by parts yields
\begin{align}
\int_\Omega \left[-\Delta_{\mu,\lambda} \mathbf{v^s} \cdot \mathbf{w^s}\right]\,dx = & \int_\Omega [2 \mu\epsilon(\mathbf{v^s}) : \epsilon(\mathbf{w^s}) + \lambda\diver{\mathbf{v^s}}\cdot\diver{\mathbf{w^s}} ]\,dx \nonumber \vspace{1ex} \\
  & + \int_{\partial \Omega} \left[ -2 \mu \left( \epsilon(\mathbf{v^s})\mathbf{w^s} \right) - (\lambda\diver{\mathbf{v^s}})\mathbf{w^s} \right] \cdot \nu \; dx. \label{bdterm1}
\end{align}
The boundary term vanishes owing to the compact support of $\mathbf{w}$.

The remaining three integrands in \eqref{innerprod} can be treated using
the standard integration by parts:
\begin{align}
 & \int_\Omega \left[ - \nabla(q\diver{\mathbf{v^f}}) \cdot \mathbf{w^s} - \nabla(q\diver{\mathbf{v^s}}) \cdot \mathbf{w^f} - \nabla(r\diver{\mathbf{v^f}}) \cdot \mathbf{w^f} \right] \,dx \nonumber \vspace{1ex} \\
 = & \int_\Omega \left[ q\,\diver{\mathbf{v^f}} \cdot \diver{\mathbf{w^s}} + q\,\diver{\mathbf{v^s}} \cdot \diver{\mathbf{w^f}} + r\,\diver{\mathbf{v^f}} \cdot \diver{\mathbf{w^f}} \right] \,dx \nonumber \vspace{1ex} \\
 & + \int_{\partial\Omega} \left[ -(q \diver{\mathbf{v^f}})\mathbf{w^s} -(q \diver{\mathbf{v^s}})\mathbf{w^f} -(r \diver{\mathbf{v^f}})\mathbf{w^f} \right] \cdot \nu \;dx. \label{bdterm2}
\end{align}
The boundary term again vanishes. This completes the proof.
\end{proof}

Let $L^2(\Omega;M)$ be a weighted $L^2$-space with measure $M(x)dx$ where $M(x)$
is the positive definite matrix defined in \eqref{MPD}; in other words, for any
$\mathbf{v}\in (L^2(\Omega))^6$, $\|\mathbf{v}\|_{L^2(\Omega;M)} :=
(\mathbf{v},M\mathbf{v})_{L^2}$. The space $L^2(\mathbb{R}^3;M)$ and the norm
$\|\cdot\|_{L^2(\mathbb{R}^3;M)}$ is defined similarly with the domain of
integration replaced by $\mathbb{R}^3$.

Given a time-dependent function $\mathbf{u}(t,x)$, define its total energy over
the domain $\Omega$ at time $t$:
\begin{align*}
E_\Omega(t,\mathbf{u})&=  \|\partial_t \mathbf{u}(t,\cdot)\|^2_{L^2(\Omega;M)} +
\|\mathbf{u}(t,\cdot)\|^2_{B} 
 =  \int_\Omega \big(M(x)\partial_t \mathbf{u} \cdot \partial_t \mathbf{u}\\
 &+\lambda |\diver{\mathbf{u^s}}|^2 + 2 \mu |\epsilon(\mathbf{u^s})|^2 +
r |\diver{\mathbf{u^f}}|^2 + 2 q
(\diver{\mathbf{u^f}})(\diver{\mathbf{u^s}}) \big) \; dx.
\end{align*}
This quantity is conservative on a bounded domain $\Omega$ for the solution of
Biot's equations when imposed with appropriate boundary conditions.

\begin{lemma} \label{thm:COE}
Let $\mathbf{u}$ satisfy Biot's equations with zero Dirichlet boundary
condition:
$$ \left\{
\begin{array}{rcl}
M 
\partial^2_t \mathbf{u}
+
P(D) \mathbf{u}
& = & \mathbf{0}  \quad\quad\quad \text{ in } (0,T)\times\Omega, \\
\mathbf{u}|_{[0,T]\times\partial\Omega} &=& \mathbf{0}, \\
\end{array} \right.
$$
then
$$
E_{\Omega}(t,\mathbf{u}) = E_{\Omega}(0,\mathbf{u})\quad\quad 0\leq t \leq
T.
$$
\end{lemma}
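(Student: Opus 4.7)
The plan is to show that $\frac{d}{dt}E_\Omega(t,\mathbf{u}) \equiv 0$ on $[0,T]$, from which the conclusion is immediate by integration in $t$.

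First, I would differentiate the energy term by term. Since the measure $M(x)\,dx$ is time-independent, the kinetic part gives $\frac{d}{dt}\|\partial_t \mathbf{u}\|_{L^2(\Omega;M)}^2 = 2(M\partial_t^2 \mathbf{u},\partial_t \mathbf{u})_{L^2(\Omega)}$, and symmetry of the bilinear form $B$ yields $\frac{d}{dt}\|\mathbf{u}\|_B^2 = 2 B(\mathbf{u},\partial_t \mathbf{u})$. Thus
\[
\tfrac{1}{2}\tfrac{d}{dt} E_\Omega(t,\mathbf{u}) = (M\partial_t^2 \mathbf{u},\partial_t \mathbf{u})_{L^2(\Omega)} + B(\mathbf{u},\partial_t \mathbf{u}).
\]

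Next, I would verify that Lemma \ref{thm:BPD} applies with the pair $(\mathbf{v},\mathbf{w}) = (\mathbf{u},\partial_t \mathbf{u})$. Differentiating the boundary condition $\mathbf{u}|_{[0,T]\times\partial\Omega} = \mathbf{0}$ in $t$ gives $\partial_t\mathbf{u}|_{[0,T]\times\partial\Omega} = \mathbf{0}$, so the hypothesis of Lemma \ref{thm:BPD} is met and
\[
B(\mathbf{u},\partial_t \mathbf{u}) = (P(D)\mathbf{u},\partial_t \mathbf{u})_{L^2(\Omega)}.
\]
Substituting back and invoking the PDE $M\partial_t^2\mathbf{u} + P(D)\mathbf{u} = \mathbf{0}$, the right-hand side becomes $(M\partial_t^2 \mathbf{u} + P(D)\mathbf{u},\partial_t \mathbf{u})_{L^2(\Omega)} = 0$, which gives $\frac{d}{dt}E_\Omega(t,\mathbf{u}) = 0$ and hence $E_\Omega(t,\mathbf{u}) = E_\Omega(0,\mathbf{u})$.

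The only real subtlety is regularity: to justify interchanging $\partial_t$ with the spatial integrals and to make sense of the trace of $\partial_t \mathbf{u}$ on $\partial\Omega$, one needs $\mathbf{u}$ to be smooth enough (e.g., a classical solution, or obtained as a limit of such via a density argument using the well-posedness of the initial value problem). I would either state this regularity as a standing assumption on the solution (consistent with the well-posedness mentioned after \eqref{forward}) or note that the identity extends to the natural energy class by approximation. Beyond that, every step reduces to the symmetry of $B$, the integration-by-parts identity already established in Lemma \ref{thm:BPD}, and the equation itself, so no additional computation is required.
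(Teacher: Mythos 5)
Your proof is correct and follows essentially the same route as the paper: differentiate $E_\Omega(t,\mathbf{u})$ (equivalently, pair the equation with $\partial_t\mathbf{u}$), identify $(P(D)\mathbf{u},\partial_t\mathbf{u})_{L^2(\Omega)}$ with $B(\mathbf{u},\partial_t\mathbf{u})$ via Lemma \ref{thm:BPD}, and use the zero Dirichlet condition to remove boundary contributions. Your observation that $\partial_t\mathbf{u}|_{[0,T]\times\partial\Omega}=\mathbf{0}$ lets you invoke Lemma \ref{thm:BPD} verbatim, whereas the paper carries the boundary integrals of \eqref{bdterm1}--\eqref{bdterm2} explicitly and then annihilates them --- an interchangeable bookkeeping difference, with your regularity caveat being a reasonable (and in the paper implicit) addition.
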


\begin{proof}
We briefly sketch the proof since it is similar to that of Lemma
\ref{thm:BPD}. Taking the inner product of the equation with $\partial_t
\mathbf{u} = (\partial_t \mathbf{u^s}, \partial_t \mathbf{u^f})^T$, we obtain
\begin{align*}
0 & =  (\partial^2_t \mathbf{u}, \partial_t \mathbf{u})_{L^2(\Omega;M)} + (P(D)\mathbf{u},\partial_t \mathbf{u})_{L^2(\Omega)} \\
 & =  (\partial^2_t \mathbf{u}, \partial_t \mathbf{u})_{L^2(\Omega;M)} + B(\mathbf{u},\partial_t \mathbf{u}) + b.t.\\
 & =  2 \frac{d}{dt} E_\Omega(t,\mathbf{u}) + b.t..
\end{align*}
Here the second equality is justified by Lemma \ref{thm:BPD}; ``$b.t.$''
represents the arising boundary terms, which are the boundary integrals in
\eqref{bdterm1} and \eqref{bdterm2} with $\mathbf{v}$ and $\mathbf{w}$ replaced
by $\mathbf{u}$ and $\partial_t \mathbf{u}$ respectively. The zero Dirichlet
boundary condition annihilates $b.t.$, which completes the proof of
conservation of energy.
\end{proof}

The proof verifies the well known fact that zero Dirichlet
boundary condition preserves energy. In fact each of the following boundary
conditions
\begin{enumerate}[(i)]
\item $\mathbf{u^s}(t,x)=0$ and $\mathbf{u^f}\cdot\nu (t,x)=0$ on $(0,T)\times\partial\Omega$;
\item $\mathbf{u^s}\cdot\nu (t,x)=0$ and $\mathbf{u^f}(t,x)=0$ on $(0,T)\times\partial\Omega$;
\item $\mathbf{u^s}\cdot\nu (t,x)=0$ and $\mathbf{u^f}\cdot\nu (t,x)=0$ on $(0,T)\times\partial\Omega$.
\end{enumerate}
is energy preserving as well, since each of them is sufficient to annihilate the
boundary term ``$b.t.$'' in the proof.

Let $\mathbf{u}$ be the solution of the direct problem \eqref{forward}. We can
also consider the energy
over the entire space $E_{\mathbb{R}^3}(t,\mathbf{u})$. This global energy is
conservative as well, i.e.,
$$
E_{\mathbb{R}^3}(t,\mathbf{u}) = E_{\mathbb{R}^3}(0,\mathbf{u}) =
\|\mathbf{f}\|^2_B\quad\quad 0\leq t \leq T.
$$
To show this, one can take a large ball $B(0;R)$ so that the solution
$\mathbf{u}(t,\cdot)$ is supported inside $B(0;R)$ for any $t\in [0,T]$. Then
Lemma \ref{thm:COE} is applicable since $\mathbf{u}|_{[0,T]\times\partial
B(0;R)}=\mathbf{0}$ and $E_{\mathbb{R}^3}(t,\mathbf{u}) =
E_{\Omega}(t,\mathbf{u})$ for any $t\in [0,T]$.

\section{Main Theorem} \label{sec:mainthm}

Let $\mathbf{v}$ be the solution of
\begin{equation} \label{invBiot}
\left\{
\begin{array}{rcll}
 M \partial^2_t \mathbf{v} + P(D) \mathbf{v} & = & 0\quad \text{ in }
(0,T)\times\Omega, \\
 \mathbf{v}|_{[0,T]\times\partial\Omega} & = & h, \\
 \mathbf{v}(T,\cdot) & = & \bm{\phi}, \\
 \partial_t \mathbf{v}(T,\cdot) & = & \mathbf{0},
\end{array}
\right.
\end{equation}
where $\bm{\phi}$ is the function satisfying
$$
P(D)\bm{\phi} = 0, \quad\bm{\phi}|_{\partial\Omega} = h(T,\cdot).
$$
The solution $\bm{\phi}$ exists since the analysis in Section \ref{sec:system}
manifests that $P(D)\bm{\phi}=0$ can be transformed into an elliptic
system, which is the time-independent counterpart of \eqref{pssystem}. Define
the time reversal operator
$$
Ah : = \mathbf{v}(0,\cdot).
$$

Now we take $h=\bm{\Lambda} \mathbf{f}$ as the boundary measurement and
expect $A\bm{\Lambda} \mathbf{f}$ to be a reasonable approximation of
$\mathbf{f}$. The rationale, from a microlocal viewpoint, is that the hyperbolic
operator in the forward problem propagates microlocal singularities of
$\mathbf{f}$ to $\partial\Omega$, while the time-reversal process tends to send
back these singularities. This suggests a possible reconstruction of
$\mathbf{f}$, as least on the level of principal symbols.

The microlocal viewpoint also suggests the necessity of an additional assumption
to make sure that all the microlocal singularities of $\mathbf{u}$, the solution
of \eqref{forward}, are not trapped, i.e., all of them are able to reach
$\partial\Omega$ in a finite time. This leads to the non-trapping condition:
there exists a maximal escaping time $T(M, P(D),\Omega)>0$, depending on $M$,
$P(D)$ and the geometry of $\Omega$, such that all the (microlocal)
singularities of $\mathbf{u}$ are out of $\Omega$ whenever $t>T(M,
P(D),\Omega)$; in other words, $\mathbf{u}(t,x)$ is smooth for $x\in\Omega$
whenever $t>T(M, P(D),\Omega)$. Violation of the non-trapping condition would
cause loss of singularities in the measured data $\bm{\Lambda} \mathbf{f}$,
leading to unstable reconstruction of $\mathbf{f}$.

Now we are in the position to state and prove the main theorem. We show
that $A$ is the inverse of $\bm{\Lambda}$ up to a compact operator and the
compact operator becomes a contraction on a suitable function space.

Recall that $\|\cdot\|_B$ is merely a
semi-norm on $V$: it is not positive definite since $\|\mathbf{w}\|_B=0$ implies
only $\mathbf{w^s}=0$ and $\diver{\mathbf{w^f}}=0$. We can take $V$ modulo the
closed subspace $\{\mathbf{w}\in V: \mathbf{w^s}=0, \; \diver{\mathbf{w^f}}=0\}$
to make it a genuine norm. Denote by $B$ the quotient space,
then $(B,\|\cdot\|_B)$ is a Banach space and $\|\mathbf{w}\|_B=0$ implies
$\mathbf{w}=0$ in $B$. We  also project $Ah=\mathbf{v}(0,\cdot)$ to this
quotient space and abuse the notation to call $A$ composed with the canonical
projection as $A$. So $A$ maps into $(B,\|\cdot\|_B)$.

\begin{thm} \label{thm:main}
Let $\Omega$ be non-trapping and $T>T(P(D),\Omega)$. Suppose that the hypotheses
(\textbf{H1})-(\textbf{H3}) and the assumption of Proposition \ref{thm:BiotUC}
are satisfied. Then $K:=I-A\bm{\Lambda}$ is compact and contractive on $B$ in
the sense that $\|K\|_{B\rightarrow B} < 1$. As a consequence, $I-K$ is
invertible on $B$ and
$$
\mathbf{f} = \sum^{\infty}_{j=0} K^j A\bm{\Lambda} \mathbf{f} = A\bm{\Lambda}
\mathbf{f} + K A\bm{\Lambda} \mathbf{f} + K^2 A\bm{\Lambda} \mathbf{f} + \dots.
\quad\quad\quad\text{ in } B.
$$
\end{thm}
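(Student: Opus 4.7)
The plan is to follow the Stefanov--Uhlmann scheme: derive an energy identity that compares $\mathbf{f}$ to $K\mathbf{f}=\mathbf{f}-A\bm{\Lambda}\mathbf{f}$, use it to establish the weak contraction $\|K\mathbf{f}\|_B\le\|\mathbf{f}\|_B$ together with a strict inequality whenever $\mathbf{f}\neq 0$ in $B$, then prove that $K$ is compact by a microlocal parametrix argument, and finally combine the two to conclude $\|K\|_{B\to B}<1$ and invert $I-K$ by a Neumann series.

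For the energy identity, let $\mathbf{u}$ solve the forward problem \eqref{forward} with source $\mathbf{f}$ and let $\mathbf{v}$ solve the time-reversed problem \eqref{invBiot} with $h=\bm{\Lambda}\mathbf{f}$. The difference $\mathbf{w}:=\mathbf{u}-\mathbf{v}$ satisfies Biot's equations on $(0,T)\times\Omega$ with zero Dirichlet data, so Lemma \ref{thm:COE} yields $E_\Omega(0,\mathbf{w})=E_\Omega(T,\mathbf{w})$. At $t=0$ we have $\mathbf{w}(0,\cdot)=K\mathbf{f}$ and $\partial_t\mathbf{w}(0,\cdot)=-\partial_t\mathbf{v}(0,\cdot)$, while at $t=T$ we have $\mathbf{w}(T,\cdot)=\mathbf{u}(T,\cdot)-\bm{\phi}$ and $\partial_t\mathbf{w}(T,\cdot)=\partial_t\mathbf{u}(T,\cdot)$. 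The key $B$-orthogonality $B(\mathbf{u}(T)-\bm{\phi},\bm{\phi})=0$ follows from Lemma \ref{thm:BPD} applied to $\mathbf{u}(T)-\bm{\phi}$ (which vanishes on $\partial\Omega$) and $\bm{\phi}$ (annihilated by $P(D)$), giving $\|\mathbf{u}(T)-\bm{\phi}\|_B^2=\|\mathbf{u}(T)\|_B^2-\|\bm{\phi}\|_B^2$. Combining with global energy conservation $E_{\mathbb{R}^3}(T,\mathbf{u})=\|\mathbf{f}\|_B^2$ and the splitting $E_{\mathbb{R}^3}(T,\mathbf{u})=E_\Omega(T,\mathbf{u})+E_{\mathbb{R}^3\setminus\Omega}(T,\mathbf{u})$ rearranges to
\[
\|K\mathbf{f}\|_B^2+\|\partial_t\mathbf{v}(0,\cdot)\|_{L^2(\Omega;M)}^2+\|\bm{\phi}\|_B^2+E_{\mathbb{R}^3\setminus\Omega}(T,\mathbf{u})=\|\mathbf{f}\|_B^2,
\]
which immediately gives $\|K\mathbf{f}\|_B\le\|\mathbf{f}\|_B$ and, in the equality case, forces $E_{\mathbb{R}^3\setminus\Omega}(T,\mathbf{u})=0$. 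The latter yields $\mathbf{u^s}(T,\cdot)=0$ and $\diver\mathbf{u^f}(T,\cdot)=0$ on $\mathbb{R}^3\setminus\Omega$, so Proposition \ref{thm:BiotUC} applies and returns $\mathbf{f^s}=0$ together with $\diver\mathbf{f^f}=0$, i.e.\ $\mathbf{f}=0$ in the quotient space $B$.

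For compactness of $K$ I would invoke a microlocal parametrix construction in the spirit of \cite{SU2009}: the non-trapping assumption together with the symmetric hyperbolic system \eqref{Biotsymhyper} guarantees that every singularity of $\mathbf{u}$ reaches $\partial\Omega$ by time $T$, while the time-reversal step propagates those singularities backwards along the same bicharacteristic. Matching at the level of principal symbols then shows that $A\bm{\Lambda}$ equals the identity modulo a smoothing operator, which is compact on $B$ via Rellich. I expect this to be the main obstacle, because the symbolic calculus must be carried through the coupled solid/fluid components: the two wave speeds $a_1,a_2$ in \eqref{pssystem} produce different bicharacteristic flows that both need to be recovered, and the elliptic correction $\bm{\phi}$ has to be incorporated without spoiling the microlocal matching.

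Finally, strict contractivity follows from compactness plus the strict inequality above. Since $B$ is a Hilbert space (the form $B(\cdot,\cdot)$ is an inner product on the quotient), $K^*K$ is compact and self-adjoint, hence $\|K\|^2=\|K^*K\|$ is an eigenvalue attained at some $\mathbf{f}_0$; the equality $\|K\mathbf{f}_0\|_B=\|K\|\,\|\mathbf{f}_0\|_B$ is incompatible with the strict energy inequality unless $\mathbf{f}_0=0$, forcing $\|K\|_{B\to B}<1$. The Neumann series $\sum_{j\ge 0}K^j$ therefore converges in the operator norm, and applying it to $A\bm{\Lambda}\mathbf{f}=(I-K)\mathbf{f}$ produces the desired explicit reconstruction $\mathbf{f}=\sum_{j\ge 0}K^j A\bm{\Lambda}\mathbf{f}$ in $B$.
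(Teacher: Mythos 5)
Your proposal is correct, and its core coincides with the paper's own proof of Claim 1: the same difference $\mathbf{w}=\mathbf{u}-\mathbf{v}$ with zero Dirichlet data, the same $B$-orthogonality $(\mathbf{u}(T)-\bm{\phi},\bm{\phi})_B=0$ via Lemma \ref{thm:BPD}, energy conservation from Lemma \ref{thm:COE} plus global conservation, and Proposition \ref{thm:BiotUC} in the equality case. Your exact identity $\|K\mathbf{f}\|_B^2+\|\partial_t\mathbf{v}(0,\cdot)\|_{L^2(\Omega;M)}^2+\|\bm{\phi}\|_B^2+E_{\mathbb{R}^3\setminus\Omega}(T,\mathbf{u})=\|\mathbf{f}\|_B^2$ checks out and is a sharpening of the paper's chain of inequalities into an identity, which makes the equality analysis cleaner. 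You diverge from the paper in the two operator-theoretic steps, in opposite directions. For compactness, the paper builds no parametrix: it factors $K=\pi_1\circ U_2\circ U_1\circ\pi_1^*$ and observes that, because $T(M,P(D),\Omega)$ is \emph{defined} so that $\mathbf{u}(t,\cdot)$ is smooth in $\Omega$ for $t>T(M,P(D),\Omega)$, the map $U_1:(\mathbf{f},\mathbf{0})\mapsto(\mathbf{u}(T)-\bm{\phi},\partial_t\mathbf{u}(T))$ is smoothing ($\bm{\phi}$ is smooth by elliptic regularity), hence compact, while $U_2$ need only be bounded. So the ``main obstacle'' you anticipate --- matching principal symbols through the two speeds $a_1,a_2$ to show $A\bm{\Lambda}=I$ modulo smoothing --- dissolves: nothing that strong is needed, and your heavier microlocal plan, the one part of your proposal left unexecuted, can simply be replaced by the paper's direct argument. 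Conversely, your deduction of $\|K\|_{B\to B}<1$ is \emph{tighter} than the paper's: the paper argues that Claim 1 excludes eigenvalues of modulus $\geq 1$, hence the spectral radius of the compact operator $K$ is $<1$, and then asserts $\|K\|<1$; but spectral radius $<1$ does not imply operator norm $<1$ for non-normal operators (it does suffice for norm convergence of the Neumann series, since $\|K^j\|^{1/j}\to r(K)$). Your argument via the compact self-adjoint $K^\ast K$ on the Hilbert space $(B,\|\cdot\|_B)$ --- legitimate because $\|\cdot\|_B$ arises from a symmetric bilinear form, so the parallelogram law holds and the paper's Banach space $B$ is in fact Hilbert --- attains $\|K\|^2$ as an eigenvalue at some $\mathbf{f}_0\neq 0$ and rules out $\|K\|=1$ by the strict inequality of Claim 1, which is exactly the supplement the paper's last step needs. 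Net assessment: a sound proof with the same skeleton, trading the paper's essentially-by-definition compactness step for a harder sketched one, and repairing the paper's loose contraction step.
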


\begin{proof}
The proof is divided into two claims. We first show the inequality
$\|K\|_{B\rightarrow B}\leq 1$, and then prove by a contra-positive argument
that the inequality is strict. Given a time-dependent function $g(t,x)$, we
abbreviate $g(t)$ for the spatial function $g(t,\cdot)$.

\emph{Claim 1: $\|K\mathbf{f}\|_B < \|\mathbf{f}\|_B$ unless
$\mathbf{f}=\mathbf{0}$ in $B$.}

Let us give another representation of $K\mathbf{f}$. Let
$\mathbf{u}$ be the solution of \eqref{forward}; let $\mathbf{v}$ be the
solution of \eqref{invBiot} with $h$ replaced by $\bm{\Lambda} \mathbf{f}$.
Denote $\mathbf{w}:=\mathbf{u}-\mathbf{v}$, then $\mathbf{w}$ satisfies
\begin{equation} \label{eqn:diff}
\left\{
\begin{array}{rcll}
 M \partial^2_t \mathbf{w} + P(D) \mathbf{w} & = & \mathbf{0}
\quad\text{ in } (0,T)\times\Omega, \\
 \mathbf{w}|_{[0,T]\times\partial\Omega} & = & \mathbf{0}, \\
 \mathbf{w}(T) & = & \mathbf{u}(T)-\bm{\phi}, \\
 \partial_t \mathbf{w}(T) & = & \partial_t \mathbf{u}(T).
\end{array}
\right.
\end{equation}
Moreover, we have
\begin{equation} \label{K}
K\mathbf{f} = \mathbf{f}-A\bm{\Lambda} \mathbf{f} = \mathbf{u}(0) - \mathbf{v}(0) = \mathbf{w}(0). 
\end{equation}

On the other hand, it is clear to note that
$(\mathbf{u}(T)-\bm{\phi})|_{\partial\Omega}=\mathbf{0}$ by the construction of
$\bm{\phi}$. It follows from Lemma \ref{thm:BPD} that
$$
(\mathbf{u}(T)-\bm{\phi},\bm{\phi})_B =
(\mathbf{u}(T)-\bm{\phi},P(D)\bm{\phi})_{L^2(\Omega)} = 0,
$$
which gives $\|\mathbf{u}(T)-\bm{\phi}\|^2_B = \|\mathbf{u}(T)\|^2_B -
\|\bm{\phi}\|^2_B$. It is easy to verify that 
\begin{align*}
E_\Omega(T,\mathbf{w}) & = \|\partial_t \mathbf{w}(T)\|^2_{L^2(\Omega;M)} + \|\mathbf{w}(T)\|^2_B \\
 & = \|\partial_t \mathbf{u}(T)\|^2_{L^2(\Omega;M)} + \|\mathbf{u}(T)-\bm{\phi}\|^2_B \\
 & = \|\partial_t \mathbf{u}(T)\|^2_{L^2(\Omega;M)} + \|\mathbf{u}(T)\|^2_B - \|\bm{\phi}\|^2_B \\
 & = E_\Omega(T,\mathbf{u}) - \|\bm{\phi}\|^2_B \\
 & \leq E_\Omega(T,\mathbf{u}).
\end{align*}
Combining Lemma \ref{thm:COE} and conservation of energy in
$\mathbb{R}^3$ yields
$$
E_\Omega(0,\mathbf{w}) = E_\Omega(T,\mathbf{w}) \leq E_\Omega(T,\mathbf{u}) \leq E_{\mathbb{R}^3}(T,\mathbf{u}) =  E_{\mathbb{R}^3}(0,\mathbf{u}) = \|\mathbf{f}\|^2_B. 
$$
Thus we have from \eqref{K} that 
\[
\|K\mathbf{f}\|^2_B = \|\mathbf{w}(0)\|^2_B \leq E_\Omega(0,\mathbf{w}) \leq \|\mathbf{f}\|^2_B.
\]
Suppose the equality holds for some $\mathbf{f}\in B$, then all the above inequalities become equalities. In particular $E_\Omega(T,\mathbf{u})=E_{\mathbb{R}^3}(T,\mathbf{u})$, which implies 
$$
\mathbf{u^s}(T,x) = \diver{\mathbf{u^f}}(T,x) = 0 \quad \text{ for }
x\in\mathbb{R}^3\backslash\Omega.
$$
By Proposition \ref{thm:BiotUC}, we obtain $\mathbf{f^s}(x)=0$ and
$\diver{\mathbf{f^f}}(x)=0$ for $x\neq 0$. Changing the value at the single
point $x=0$ does not affect a function in $B$. This completes the proof of Claim
1.

\emph{Claim 2: $K:B\rightarrow B$ is compact and $\|K\|_{B\rightarrow B} <1$.}

Claim 1 alone implies $\|K\|_{B\rightarrow B} \leq 1$. To prove the strict
inequality, we show $K$ is a compact operator on $B$. The spectrum of a
compact operator consists of countably many eigenvalues which may accumulate
only at $0$. Since Claim 1 excludes eigenvalues of modulus 1, the spectral
radius of $K$ must be strictly less than $1$, proving that $\|K\|_{B\rightarrow
B} <1$.

Next we prove the compactness of $K$. Using the representation \eqref{K}, we 
decompose $K$ into composition of bounded operators:
\begin{align*}
&\mathbf{f} \overset{\pi^\ast_1}{\longmapsto} (\mathbf{f},\mathbf{0})
\overset{U_1}{\longmapsto} (\mathbf{u}|_{t=T}-\bm{\phi}, \partial_t
\mathbf{u}|_{t=T})\\
&=(\mathbf{w}(T),\partial_t \mathbf{w}(T))
\overset{U_2}{\longmapsto} (\mathbf{w}(0),\partial_t \mathbf{w}(0))
\overset{\pi_1}{\longmapsto} \mathbf{w}(0).
\end{align*}
Here $\pi_1:(\mathbf{f},\mathbf{g})\mapsto \mathbf{f}$ is the natural projection
onto the first component; $\pi^\ast_1$ is its adjoint; $U_1$ is the solution
operator of the forward problem \eqref{forward}, mapping the state $t=0$ to the
state $t=T$; and $U_2$ is the solution operator of \eqref{eqn:diff} sending
$t=T$ to $t=0$. These are all bounded operators.

Consider
$$
U_1: (\mathbf{f},\mathbf{0}) \longmapsto (\mathbf{u}|_{t=T}-\bm{\phi}, \partial_t \mathbf{u}|_{t=T}).
$$
In view of the assumption that $T>T(P(D),\Omega)$, all the microlocal
singularities of $\mathbf{f}$ have escaped from $\Omega$ at the moment $T$,
hence $(\mathbf{u}|_{t=T},\partial_t \mathbf{u}|_{t=T})$ is a pair of smooth
functions. On the other hand, the function $\bm{\phi}$, as a solution of the
elliptic equations $P(D)\bm{\phi}=0$, is smooth by elliptic regularity. We
conclude that $U_1$ is a smoothing operator, hence compact. This means $K$ is
compact as well, since it is the composition of $U_1$ with other bounded
operators.

We know that $K$ is a contraction on $B$, $(I-K)^{-1}$ exists as a bounded
operator.  Applying $(I-K)^{-1}$ to the identity $(I-K)\mathbf{f} =
A\bm{\Lambda} \mathbf{f}$ and expanding it in terms of Neumann series, we obtain
the reconstruction formula in the statement of the theorem.
\end{proof}

The following stability estimate shows that the faster the energy escapes from
$\Omega$, the faster the convergence of the Neumann series is.

\begin{coro}
Under the assumption of Theorem \ref{thm:main}, the following stability
estimate holds
$$
\|K\mathbf{f}\|_B \leq \left(
\frac{E_\Omega(T,\mathbf{u})}{E_{\Omega}(0,\mathbf{u})} \right)^{\frac{1}{2}}
\|\mathbf{f}\|_B\quad\text{ for  } \mathbf{f}\neq 0 \text{ in } B.
$$
\end{coro}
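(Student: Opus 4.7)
The plan is to extract the stability estimate from the chain of inequalities already assembled in Claim 1 of the proof of Theorem \ref{thm:main}. Specifically, recall that with $\mathbf{w}:=\mathbf{u}-\mathbf{v}$ where $\mathbf{v}$ solves \eqref{invBiot} with $h=\bm{\Lambda}\mathbf{f}$, one has $K\mathbf{f}=\mathbf{w}(0)$, and the boundary condition $\mathbf{w}|_{[0,T]\times\partial\Omega}=\mathbf{0}$ together with Lemma \ref{thm:COE} yields energy conservation $E_\Omega(0,\mathbf{w})=E_\Omega(T,\mathbf{w})$. The identity $(\mathbf{u}(T)-\bm{\phi},\bm{\phi})_B=0$ from Lemma \ref{thm:BPD} further gives $E_\Omega(T,\mathbf{w})=E_\Omega(T,\mathbf{u})-\|\bm{\phi}\|_B^2\leq E_\Omega(T,\mathbf{u})$.

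The key additional observation is to identify the denominator $E_\Omega(0,\mathbf{u})$. From the initial conditions in \eqref{forward}, namely $\mathbf{u}(0)=\mathbf{f}$ and $\partial_t\mathbf{u}(0)=\mathbf{0}$, the definition of the total energy gives
\[
E_\Omega(0,\mathbf{u}) = \|\partial_t \mathbf{u}(0)\|^2_{L^2(\Omega;M)} + \|\mathbf{u}(0)\|^2_B = \|\mathbf{f}\|_B^2,
\]
which is the quantity that appears on the right-hand side of the claimed inequality (this also matches the global identity $E_{\mathbb{R}^3}(0,\mathbf{u})=\|\mathbf{f}\|_B^2$ used in Claim 1).

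Stringing these together, I would write
\[
\|K\mathbf{f}\|_B^2 = \|\mathbf{w}(0)\|_B^2 \leq E_\Omega(0,\mathbf{w}) = E_\Omega(T,\mathbf{w}) \leq E_\Omega(T,\mathbf{u}) = \frac{E_\Omega(T,\mathbf{u})}{E_\Omega(0,\mathbf{u})}\,\|\mathbf{f}\|_B^2,
\]
and take square roots. The first inequality uses that $\|\mathbf{w}(0)\|_B^2$ is one of the two nonnegative summands in $E_\Omega(0,\mathbf{w})$; the rest is the chain recalled above. Since the proof of Theorem \ref{thm:main} already carries out every estimate in this chain, the main task here is simply to isolate the intermediate bound $\|K\mathbf{f}\|_B^2\leq E_\Omega(T,\mathbf{u})$ and divide by $E_\Omega(0,\mathbf{u})=\|\mathbf{f}\|_B^2$, which is well-defined precisely because we assume $\mathbf{f}\neq 0$ in $B$.

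There is essentially no obstacle; the only subtle point is to verify that the semi-norm $\|\cdot\|_B$ on $V$ descends correctly to the quotient space $B$ so that $\|\mathbf{f}\|_B\neq 0$ gives a strictly positive denominator, and that the two energy quantities on the right are genuinely finite, which follows from $\mathbf{f}\in B$ and the well-posedness discussion following \eqref{forward}. The interpretation is transparent: the smaller $E_\Omega(T,\mathbf{u})$ is relative to $E_\Omega(0,\mathbf{u})$, i.e.\ the faster the energy leaves $\Omega$ by time $T$, the sharper the contraction constant of $K$, and hence the faster the convergence of the Neumann series in Theorem \ref{thm:main}.
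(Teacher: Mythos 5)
Your proposal is correct and follows essentially the same route as the paper: the paper's proof is exactly the chain $\|K\mathbf{f}\|^2_B = \|\mathbf{w}(0)\|^2_B \leq E_\Omega(0,\mathbf{w}) \leq E_\Omega(T,\mathbf{u})$ divided by $E_\Omega(0,\mathbf{u}) = \|\mathbf{f}\|^2_B$, recycling the estimates from Claim 1 of Theorem \ref{thm:main}. Your only addition is to spell out the identification $E_\Omega(0,\mathbf{u}) = \|\mathbf{f}\|^2_B$ via the initial conditions of \eqref{forward}, which the paper leaves implicit.
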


\begin{proof}
A simple calculation yields that 
$$
\frac{\|K\mathbf{f}\|^2_B}{\|\mathbf{f}\|^2_B} = \frac{\|\mathbf{w}(0)\|^2_B}{E_\Omega(0,\mathbf{u})} \leq \frac{E_\Omega(0,\mathbf{w})}{E_\Omega(0,\mathbf{u})} \leq  \frac{E_\Omega(T,\mathbf{u})}{E_\Omega(0,\mathbf{u})}.
$$
\end{proof}

\textbf{Acknowledgement}. The third author would like to thank Prof. Plamen
Stefanov for many helpful discussions and Prof. Gunther
Uhlmann for bringing the reference \cite{KN2017}. He is also
grateful to the Institute of Computational and Experimenta Research in
Mathematics (ICERM), where part of this research was conducted during his 
participation in the program ``Mathematical Challenges in Radar and Seismic
Imaging'' in 2017.

\end{document}